\newtheorem{thm}{Theorem}[section]
\newtheorem{obs}[thm]{Observation}
\newtheorem{lem}[thm]{Lemma}
\newtheorem{prop}[thm]{Proposition}
\theoremstyle{remark}
\newtheorem{rem}{Remark}
\newcommand{\gD}{\mathfrak{D}}
\newcommand{\gp}{\mathfrak{p}}
\newcommand{\ogp}{\conj{\gp}}
\newcommand{\gq}{\mathfrak{q}}
\newcommand{\gr}{\mathfrak{r}}
\newcommand{\kk}{\mathbb{F}_q}
\newcommand{\FF}{\mathbb{F}}
\newcommand{\NN}{\mathbb{N}}
\newcommand{\QQ}{\mathbb{Q}}
\newcommand{\PP}{\mathbb{P}}
\newcommand{\ZZ}{\mathbb{Z}}
\newcommand{\CS}{\mathcal{S}}
\newcommand{\CO}{\mathcal{O}}
\newcommand{\SB}{\mathscr{B}}
\newcommand{\LL}{{F}}
\newcommand{\conj}[1]{\overline{#1}}
\newcommand{\even}{\equiv 0\pmod{2}}
\newcommand{\odd}{\equiv 1\pmod{2}}
\newcommand{\half}{\frac12}
\newcommand{\onto}{\twoheadrightarrow}
\newcommand{\iso}{\xrightarrow{\raisebox{-2bp}{\smash{\tiny$\,\sim\,$}}}}
\newcommand{\squares}[1]{#1^{\times2}}
\newcommand{\sing}[1]{E_{#1}}
\newcommand{\singK}{\sing{X}}
\newcommand{\Sing}[1]{\mathbf{E}_{#1}} 
\newcommand{\SingK}{\Sing{X}}
\newcommand{\units}[1]{#1^\times}
\newcommand{\dl}[1]{\Delta_{#1}}
\newcommand{\Dl}[1]{\mathbf{\Delta}_{#1}} 
\newcommand{\st}{\mid}
\DeclareMathOperator{\Norm}{Norm}
\newcommand{\N}[1][]{\ifthenelse{\equal{#1}{}}{\Norm_{K/\LL}}{\Norm_{#1}}}
\DeclareMathOperator{\norm}{norm}
\newcommand{\n}[1][]{\ifthenelse{\equal{#1}{}}{\norm_{K/\LL}}{\norm_{#1}}}
\newcommand{\rk}[1][]{\ifthenelse{\equal{#1}{}}{\rank_{2}}{\rank_{#1}}}
\newcommand{\Tt}[1][]{\ifthenelse{\equal{#1}{}}{T,t}{T_{#1},t_{#1}}}
\DeclareMathOperator{\Br}{Br}
\DeclareMathOperator{\Cl}{\Pic}
\DeclareMathOperator{\varCl}{Cl}
\DeclareMathOperator{\Div}{Div}
\DeclareMathOperator{\Pic}{Pic}
\newcommand{\PicK}{\Pic X}
\DeclareMathOperator{\Card}{card}
\newcommand{\card}[1]{\Card(#1)}
\DeclareMathOperator{\chr}{char}
\DeclareMathOperator{\dv}{div}
\DeclareMathOperator{\intcl}{int.cl.}
\DeclareMathOperator{\ord}{ord}
\DeclareMathOperator{\qf}{qf}
\DeclareMathOperator{\rank}{rk}
\newcommand{\sqgd}[1]{\sfrac{\units{#1}}{\squares{#1}}}
\newcommand{\form}[1]{\langle #1\rangle}
\newcommand{\aff}[1]{#1^{\text{aff}}}
\newcommand{\quat}[2]{\bigl(\frac{#1}{#2}\bigr)}
\newcommand{\wild}[1]{\mathcal{W}(#1)}
\newcommand{\term}[1]{\emph{#1}}
\author[A. Czoga{\l}a \and P. Koprowski \and B. Rothkegel]{Alfred Czoga\l a \and Przemys{\l}aw Koprowski \and Beata Rothkegel}
\address{Institute of Mathematics\\
University of Silesia\\ Bankowa 14\\ 40-007 Katowice,
Poland} \email{alfred.czogala@us.edu.pl}
\address{Institute of Mathematics\\
University of Silesia\\ Bankowa 14\\ 40-007 Katowice,
Poland} \email{przemyslaw.koprowski@us.edu.pl}
\address{Institute of Mathematics\\
University of Silesia\\ Bankowa 14\\ 40-007 Katowice,
Poland} \email{brothkegel@math.us.edu.pl}
\title{Wild and even points in global function fields}
\begin{document}
\begin{abstract}
We develop a criterion for a point of global function field to be a unique wild point of some self-equivalence of this field. We show that this happens if and only if the class of the point in the Picard group of the field is $2$-divisible. Moreover, given a finite set of points, whose classes are $2$-divisible in the Picard group, we show that there is always a self-equivalence of the field for which this is precisely the set of wild points. Unfortunately, for more than one point this condition is no longer a necessary one. 
\end{abstract}
\subjclass[2010]{Primary 11E12; Secondary 11E81, 11G20, 14H05}
\keywords{Self-equivalence, small equivalence, wild prime}

\cleardoublepage

\maketitle
\section{Introduction and related works}
Hilbert-symbol equivalence (formerly known under the name \term{reciprocity equivalence}) appeared for the first time in early 90's in papers by J.~Carpenter, P.E.~Conner, R.~Litherland, R.~Perlis, K.~Szymiczek and the first author of this paper (see e.g. \cite{PSCL94}). It was originally introduced as a tool for investigating Witt equivalence of global fields (two fields are said to be \term{Witt equivalent}, when their Witt rings of similarity classes of non-degenerate quadratic forms are isomorphic---roughly speaking, Witt equivalent fields admit ``equivalent'' classes of orthogonal geometries). Nowadays, it is known that Witt equivalence of fields is closely related to \'etale cohomology. For fields of rational functions $K = k(X)$, the relevant groups are: $H^1(K,\sfrac\ZZ2)\cong \sqgd{K}$ the group of squares classes of $K$ and $H^2(K,\sfrac\ZZ2)\cong \Br_2(K)$, the group of 2-torsion elements in the Brauer group of $K$. When one passes to a finite extension of the field of rational functions, i.e. to the function field of an algebraic curve $X$, then the group $\sfrac{\PicK}{2\PicK}$ becomes relevant, too.

Recently, the theory of Hibert-symbol equivalence developed into a research subject by itself. It was generalized to higher-degree symbols (see e.g. \cite{CS97}, \cite{CS98}), to quaternion-symbol equivalence of real function fields (see e.g. \cite{Kop02a}), as well to a ring setting (see e.g. \cite{RC07}). One of the subjects considered in this theory is the problem of describing self-equivalences of a given field.

Let $K$ be a global field of characteristic $\neq 2$ and let $X$ denotes the set of all primes of $K$ (i.e. classes of non-trivial places on $K$). The \term{self-equivalence} of $K$ is a pair of maps $(\Tt)$, consisting of a bijection $T\colon X\iso X$ and an automorphism $t\colon \sqgd{K}\iso \sqgd{K}$ of the square-class group of $K$, satisfying the condition:
\[
(\lambda, \mu)_\gp = (t\lambda, t\mu)_{T\gp},\qquad\text{for all $\gp\in X$ and $\lambda,\mu\in \sqgd{K}$.}
\]
Here, $(\cdot, \cdot)_\gp$ denotes the Hilbert symbol $\sqgd{K_\gp}\times \sqgd{K_\gp}\to \{\pm 1\}$. Every self-equivalence of a global field induces an automorphism of its Witt ring. Given a self-equivalence of a global field $K$, a prime $\gp$ of $K$ is called \term{tame} if $\ord_\gp \lambda \equiv \ord_{T\gp} t\lambda\pmod{2}$ for all $\lambda\in K$. Otherwise $\gp$ is called \term{wild}. Few years ago, M.~Somodi gave a full characterization all finite sets of wild primes in $\QQ$ (see \cite{Som06}) and in $\QQ(i)$ (see \cite{Som08}). His results were recently generalized to a broad class of number fields by two of the authors of this article (for details see \cite{CR14}).

In this paper, we consider the same question for global function fields, i.e. algebraic function fields in one variable over finite fields. Hence from now on, $K$ is a global function field of characteristic $\neq 2$ and a (finite) field~$\kk$ is the full field of constants of~$K$. We may think of $K$ as of a field of rational functions on some smooth, irreducible complete curve $X$. The closed points of $X$ are identified with non-trivial places of $K$. We shall never explicitly refer to the generic point of $X$. Thus, in what follows, we use the word ``point'' meaning actually ``closed point''. We denote the set of closed points again by $X$.  We show (cf. Theorem~\ref{thm_even=wild}) that a point $\gp\in X$ is a unique wild point for some self-equivalence of $K$ if and only if its class in the Picard group of~$X$ is $2$-divisible (i.e. belongs to the subgroup $2\PicK$). On implication of this theorem, still holds even when we increase the number of points, this way we obtain a complete counterpart (Theorem~\ref{thm_main2}) for function fields of the results from \cite{Som06, Som08, CR14}. These two results establish a direct link between the property of being wild (for some-self equivalence) and $2$-divisibility in the Picard group of $K$. For this reason, we develop in Section~\ref{sec_2-div} some criteria for the class of a point $\gp\in X$ to be $2$-divisible in the group $\PicK$. In particular, we show (cf. Theorem~\ref{thm_2Pic<->NK}), that a point of a hyperelliptic curve (of an odd degree) is $2$-divisible in $\PicK$ (hence is a unique wild point of some self-equivalence) if and only if its norm over the rational function field is represented by the norm of the field extension $\sfrac{K}{\kk(x)}$. This in turn implies that for such curves, wild points always exist (see Proposition~\ref{prop_even_exist}).

In the paper we use the following notation (beside the one already explained above). Given a function field $K$ and a point $\gp\in X$, by $\CO_\gp$ we denote the associated valuation ring, by $K_\gp$ the completion of $K$ and by $K(\gp)$ the residue field. The degree $[K(\gp):\kk]$ of the residue field of~$\gp$ over the full field of constants is called the degree of~$\gp$ and denoted $\deg\gp$. Given a non-empty, open subset $Y\subseteq X$, we write $\CO_Y := \bigcap_{\gp\in Y} \CO_\gp$ and
\[
\sing{Y} := \bigl\{ \lambda\in \units{K}\st \forall_{\gp\in Y}\ord_\gp\lambda \equiv 0\pmod{2}\bigr\}
\]
This set is a union of cosets of $\squares{K}$ and we denote its image in the square-class group of $K$ by $\Sing{Y} := \sfrac{\sing{Y}}{\squares{{K}}}$. Further, when $Y$ is a proper subset, we consider the subset of $\sing{Y}$ consisting of all those functions that are local squares everywhere \emph{outside} $Y$, namely:
\[
\dl{Y} := \sing{Y}\cap \bigcap_{\gp\notin Y} \squares{K_\gp}= \singK\cap \bigcap_{\gp\notin Y} \squares{K_\gp}.
\]
This set again contains full square classes of $K$ and so we write $\Dl{Y} := \sfrac{\dl{Y}}{\squares{K}}$. In the special case, when $Y$ is of the form $X\setminus \{\gp\}$, i.e. is obtained from $X$ by excluding just a single point, we abbreviate the notation writing $\sing{\gp}$, $\Sing{\gp}$, $\dl{\gp}$ and $\Dl{\gp}$ for $\sing{X\setminus\{\gp\}}$, $\Sing{X\setminus\{\gp\}}$, $\dl{X\setminus\{\gp\}}$ and $\Dl{X\setminus\{\gp\}}$, respectively. 

The square-class group $\sqgd{\kk}$ has order two. We write $\zeta\in \kk\subset K$ for a fixed generator of this group, with the convention that $\zeta = -1$, whenever $-1$ is not a square in $K$ (i.e. when $\card{\kk}\equiv 3\pmod{4}$). Abusing the notation slightly, we tend to use the same symbols $\lambda, \mu,\dotsc$ to denote elements of the field as well as their classes in the square-class group of this field. Likewise, the fraktur letters $\gp, \gq,\dotsc$ denote, depending on the context, either points of $K$ or their classes in the Picard  group $\PicK$ or $\Pic \CO_Y$. Divisors, as well as their classes in the Picard group, are always written additively.

\section{Preliminaries}
Recall that if $K_\gp$ is a local field, then the square class group of $K_\gp$ consists of four elements: $1, u_\gp, \pi_\gp$ and $u_\gp\pi_\gp$, where $\pi_\gp$ is the class of a uniformizer and $u_\gp$ is the class of a unit which is not a square (see. e.g. \cite[Theorem~Vi.2.2]{Lam05}). We call $u_{\gp}$ the \term{$\gp$-primary unit}. If $(\Tt)$ is a self-equivalence of $K$, then $t$ factors over all the local square-class groups by \cite[Lemma~4]{PSCL94}. In particular it maps $1\in \sqgd{K_\gp}$ to $1\in \sqgd{K_{T\gp}}$. If it also maps $u_\gp$ to $u_{T\gp}$, then it is necessarily tame by the pigeonhole principle. Thus we proved:

\begin{obs}\label{obs_ord(tu)->wild}
The self-equivalence $(\Tt)$ is wild at a point $\gp\in X$ if and only if $\ord_{T\gp} tu_\gp\equiv 1\pmod{2}$.
\end{obs}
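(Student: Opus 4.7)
The plan is to prove both implications by directly analyzing the action of the local factor $t_\gp\colon \sqgd{K_\gp}\iso \sqgd{K_{T\gp}}$, which exists by the cited \cite[Lemma~4]{PSCL94}. Recall that the two groups at the source and the target each have exactly four elements, namely $\{1, u_\gp, \pi_\gp, u_\gp\pi_\gp\}$ and $\{1, u_{T\gp}, \pi_{T\gp}, u_{T\gp}\pi_{T\gp}\}$, and that within each group the first two classes are represented by units (even order) while the last two are represented by elements of odd order. A point $\gp$ is therefore tame exactly when $t_\gp$ preserves this ``parity partition''.

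The easier direction is the ``only if'' part of the contrapositive: if $\ord_{T\gp}tu_\gp$ is even, then $tu_\gp\in\{1, u_{T\gp}\}$, and since $t_\gp$ is a bijection sending $1$ to $1$, necessarily $tu_\gp = u_{T\gp}$. Then the remaining two classes $\pi_\gp$ and $u_\gp\pi_\gp$ must be mapped bijectively onto $\{\pi_{T\gp}, u_{T\gp}\pi_{T\gp}\}$, so both have odd image-order, and all four local square classes have their parity preserved. Since every $\lambda\in \dot{K}$ sits in one of these four classes locally, this verifies the defining condition $\ord_\gp\lambda \equiv \ord_{T\gp}t\lambda\pmod 2$ for every $\lambda$, hence $\gp$ is tame. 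This is the ``pigeonhole'' step already pointed out in the paragraph preceding the Observation.

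For the converse, I would simply exhibit a witness: assuming $\ord_{T\gp}tu_\gp$ is odd, the element $\lambda := u_\gp$ itself satisfies $\ord_\gp\lambda = 0$ while $\ord_{T\gp}t\lambda$ is odd, so by definition $\gp$ is wild. No further work is needed, and there is no real obstacle: the entire statement is essentially a parity bookkeeping argument on the four-element local square-class group, made possible by the fact that $t$ respects completions and fixes the trivial square class.
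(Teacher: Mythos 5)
Your proof is correct and follows essentially the same route as the paper, which establishes the observation via precisely this pigeonhole argument on the four-element local square-class groups (using that $t$ factors through them and fixes $1$), with $u_\gp$ itself serving as the witness in the wild case. Nothing further is needed.
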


The primary unit $u_{\gp}$ may also be characterized using Hilbert symbols as follows:
\[
(u_{\gp},\lambda)_{\gp}=(-1)^{\ord_\gp\lambda}\qquad\text{for every }\lambda\in \units{K_{\gp}}.
\]
The Hilbert symbol $(\cdot, \cdot)_\gp$ can be viewed as a non-degenerate $\FF_2$-inner product on $\sqgd{K_{\gp}}$, provided the additive group $\FF_2$ is identified with the multiplicative group $\{\pm
1\}$. The following observation is now immediate:

\begin{obs}\label{obs_taup}
Let $\gp,\gq\in X$ be two points of $K$ such that $-1\in K_\gp^2\cap K_\gq^2$, then the isomorphism $\tau\colon \sqgd{K_\gp}\to \sqgd{K_\gq}$ defined by the formula:
\[
\tau(u_\gp)=u_\gq\pi_\gq,\qquad \tau(\pi_\gp)=\pi_\gq
\]
is an isometry of the inner product spaces $\bigl(\sqgd{K_\gp},(\cdot,\cdot)_\gp\bigr)$ and $\bigl(\sqgd{K_\gq}, (\cdot,\cdot)_\gq\bigr)$.
\end{obs}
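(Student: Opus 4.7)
The plan is to verify the claim by an elementary computation of Hilbert symbols on a fixed $\FF_2$-basis of each local square-class group. Since $\sqgd{K_\gp} = \{1, u_\gp, \pi_\gp, u_\gp\pi_\gp\}$ is a $4$-element set (an $\FF_2$-vector space of dimension $2$ with basis $\{u_\gp,\pi_\gp\}$), and analogously for $\gq$, the prescription on the two basis elements uniquely determines a group homomorphism. To see that $\tau$ is an isomorphism I just need to check its action on the full square-class group: it sends $1\mapsto 1$, $u_\gp\mapsto u_\gq\pi_\gq$, $\pi_\gp\mapsto \pi_\gq$, and consequently $u_\gp\pi_\gp\mapsto u_\gq\pi_\gq\cdot \pi_\gq = u_\gq$ (using $\pi_\gq^2 = 1$ in the square-class group). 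Thus $\tau$ is a bijection.

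To establish the isometry property, it suffices—by $\FF_2$-bilinearity and symmetry of the Hilbert symbol—to compare the three pairings $(u_\gp,u_\gp)_\gp$, $(\pi_\gp,\pi_\gp)_\gp$, $(u_\gp,\pi_\gp)_\gp$ with their images $(\tau u_\gp,\tau u_\gp)_\gq$, $(\tau\pi_\gp,\tau\pi_\gp)_\gq$, $(\tau u_\gp,\tau\pi_\gp)_\gq$. The standard identity $(a,a) = (a,-1)$, together with the hypothesis that $-1$ is a local square at both $\gp$ and $\gq$, forces $(a,a)_\gp = 1$ and $(b,b)_\gq = 1$ for \emph{every} $a\in\sqgd{K_\gp}$ and $b\in\sqgd{K_\gq}$. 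Consequently both diagonal pairings on each side are trivial, so the first two comparisons are automatic.

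For the mixed pairing, the characterization of the primary unit recalled just before the observation gives $(u_\gp,\pi_\gp)_\gp = (-1)^{\ord_\gp \pi_\gp} = -1$. On the target side, bilinearity together with the same characterization yields
\[
(\tau u_\gp,\tau\pi_\gp)_\gq = (u_\gq\pi_\gq,\pi_\gq)_\gq = (u_\gq,\pi_\gq)_\gq\cdot(\pi_\gq,\pi_\gq)_\gq = (-1)\cdot 1 = -1,
\]
the last factor being trivial by the observation of the previous paragraph. Thus $\tau$ preserves the Hilbert pairing on a basis, hence on all of $\sqgd{K_\gp}$, and is therefore an isometry onto $\sqgd{K_\gq}$.

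There is essentially no obstacle to this argument; the only point to be slightly careful about is that $-1$ really has to be a square at \emph{both} points, since it is used twice: once to kill the diagonal pairings at $\gp$ and once at $\gq$. Absent this hypothesis, the ``twist'' of $u_\gp$ to $u_\gq\pi_\gq$ rather than simply to $u_\gq$ would not compensate correctly.
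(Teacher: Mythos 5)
Your verification is correct and is exactly the computation the paper leaves implicit (the paper offers no proof, declaring the observation ``immediate'' from the characterization $(u_\gp,\lambda)_\gp=(-1)^{\ord_\gp\lambda}$ and the identification of the Hilbert symbol with an $\FF_2$-inner product). Checking the Gram matrix on the basis $\{u,\pi\}$, using $(a,a)=(a,-1)=1$ at both points and $(u,\pi)=-1$, is precisely the intended argument, and your remark that the hypothesis $-1\in K_\gp^2\cap K_\gq^2$ is genuinely needed is accurate.
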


Below we gather some results concerning $2$-ranks of the class groups: either the Picard group $\PicK$ of the complete curve $X$ or the Picard group $\Pic \CO_Y$ for some fixed open subset $\emptyset \neq Y\subsetneq X$. Recall that the latter group can be identified with the ideal class group $\varCl \CO_Y$ of the coordinate ring $\CO_Y$ of $Y$, as $\CO_Y$ is a Dedekind domain. We begin with a proposition that is not new. The first assertion was proved in \cite[p.~607]{Czo01} and the second in \cite[Lemma~2.1]{Czo01}. The third assertion is a simple consequence of the previous two. We state the result explicitly only to simplify further references.

\begin{prop}\label{prop_old_result}
Let $\emptyset \neq Y\subsetneq X$ be a proper open subset of $X$, then
\begin{enumerate}
\item\label{it_rk2E_Y} $\rk \Sing{Y} = \rk \Cl\CO_Y + \card{X\setminus Y}$;
\item\label{it_rk2Delta} $\rk \Dl{Y} = \rk \Cl \CO_Y$;
\item\label{it_rk2E_Y_var} $\rk \sfrac{\Sing{Y}}{\Dl{Y}} = \card{X\setminus Y}$;
\end{enumerate}
\end{prop}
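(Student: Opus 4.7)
The three claims follow from parallel 2-rank computations via carefully chosen exact sequences of $\FF_2$-vector spaces. I would prove (1) and (2) directly and deduce (3) by subtraction.

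For (1), consider the ``half-divisor'' map
\[
\sigma\colon \sing Y\longrightarrow \Div\CO_Y,\qquad \sigma(\lambda):=\sum_{\gp\in Y}\tfrac{\ord_\gp\lambda}{2}\,\gp,
\]
which is integer-valued because $\lambda\in\sing Y$ has even orders on $Y$. Its kernel is $\CO_Y^\times$, and its image is the preimage of $\Cl\CO_Y[2]$ in $\Div\CO_Y$: one direction is $2\sigma(\lambda)=\dv_Y\lambda$, the other is the lift of any $D$ with $2D=\dv_Y\mu$ back to $\mu\in\sing Y$. Passing to square classes and using $\CO_Y^\times\cap\squares K=(\CO_Y^\times)^2$ gives the short exact sequence
\[
0\longrightarrow\CO_Y^\times/(\CO_Y^\times)^2\longrightarrow\Sing Y\longrightarrow\Cl\CO_Y[2]\longrightarrow 0.
\]
Since $Y\subsetneq X$, $\Cl\CO_Y$ is finite, so $\rk\Cl\CO_Y[2]=\rk\Cl\CO_Y$. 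The localisation sequence
\[
0\to\kk^\times\to\CO_Y^\times\to\Div(X\setminus Y)\to\Pic X\to\Cl\CO_Y\to 0,
\]
combined with $\Pic X$ having $\ZZ$-rank $1$, forces the image of $\CO_Y^\times$ in $\Div(X\setminus Y)\cong\ZZ^{|X\setminus Y|}$ to be free of $\ZZ$-rank $|X\setminus Y|-1$. Hence $\CO_Y^\times\cong\kk^\times\oplus\ZZ^{|X\setminus Y|-1}$ and $\rk\CO_Y^\times/(\CO_Y^\times)^2=|X\setminus Y|$. Adding the two ranks proves (1).

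For (2), consider the analogous map $\tau\colon \Dl Y\to\Cl\CO_Y[2]$, $\bar\lambda\mapsto[\tfrac12\dv_Y\lambda]$. Its kernel consists of classes represented by ``spurious units'', namely elements of
\[
R:=\Bigl(\CO_Y^\times\cap\bigcap_{\gp\notin Y}\squares{K_\gp}\Bigr)\Big/(\CO_Y^\times)^2
\]
--- i.e., non-square units of $\CO_Y$ that happen to be local squares at every $\gp\notin Y$. The key technical step for (2) is to show that the cokernel of $\tau$ also has $\FF_2$-rank equal to $\rk R$, so that each spurious unit is ``paid for'' by a 2-torsion Picard class unrealisable by any $\lambda\in\dl Y$. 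Once this kernel/cokernel balance is in place,
\[
\rk\Dl Y=\rk R+\bigl(\rk\Cl\CO_Y[2]-\rk R\bigr)=\rk\Cl\CO_Y,
\]
which is (2). Finally (3) drops out by subtraction from (1):
\[
\rk\frac{\Sing Y}{\Dl Y}=\rk\Sing Y-\rk\Dl Y=|X\setminus Y|.
\]

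The main obstacle is the kernel/cokernel balance in (2). The natural tool is the global Hilbert pairing: for $\lambda,\mu\in\sing Y$ the tame formula for Hilbert symbols at non-dyadic places gives $(\lambda,\mu)_\gp=1$ at every $\gp\in Y$ (both square classes are unit classes), whence global reciprocity yields $\prod_{\gp\notin Y}(\lambda_\gp,\mu_\gp)_\gp=1$. Thus the image of $\Sing Y$ in $\prod_{\gp\notin Y}\sqgd{K_\gp}$ is totally isotropic under the non-degenerate symmetric pairing there, and Witt's theorem caps its $\FF_2$-dimension at $|X\setminus Y|$; the required balance of kernel and cokernel ranks for $\tau$ follows by showing this cap is attained. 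That the cap is indeed attained --- equivalently, that every spurious unit ``absorbs'' exactly one 2-torsion Picard class --- is a Hasse-principle-type argument, and is the content of \cite[Lemma~2.1]{Czo01}.
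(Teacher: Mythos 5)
The paper offers no proof of this proposition: it cites \cite[p.~607]{Czo01} for~(1), \cite[Lemma~2.1]{Czo01} for~(2), and remarks that (3) is a consequence. Measured against that, your treatment of (1) is a complete and correct reconstruction of the standard argument: the half-divisor map gives the exact sequence $0\to\sfrac{\CO_Y^\times}{(\CO_Y^\times)^2}\to\Sing{Y}\to\Cl\CO_Y[2]\to 0$ (your identification of the kernel correctly uses $\CO_Y^\times\cap\squares{K}=(\CO_Y^\times)^2$), the $S$-unit theorem gives $\CO_Y^\times\cong\dot{\kk}\times\ZZ^{\card{X\setminus Y}-1}$, and since $\card{\dot{\kk}}$ is even this yields $\rk\sfrac{\CO_Y^\times}{(\CO_Y^\times)^2}=\card{X\setminus Y}$; finiteness of $\Cl\CO_Y$ gives $\rk\Cl\CO_Y[2]=\rk\Cl\CO_Y$. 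The deduction of (3) from (1) and (2) is immediate and is exactly what the paper intends.

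Assertion (2), however, is not actually proved in your write-up. What you establish is only one inequality: Hilbert reciprocity shows that the image of $\Sing{Y}$ in $\prod_{\gp\notin Y}\sqgd{K_\gp}$ is totally isotropic, hence $\rk\sfrac{\Sing{Y}}{\Dl{Y}}\leq\card{X\setminus Y}$, which by (1) gives $\rk\Dl{Y}\geq\rk\Cl\CO_Y$. The reverse inequality --- your ``kernel/cokernel balance'' for $\tau$, equivalently the maximality of that isotropic subspace --- is the entire content of (2), and the sentence ``the required balance follows by showing this cap is attained'' is circular: the cap being attained \emph{is} statement (3), which you propose to deduce from (2). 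Nothing formal forces the $\rk R$ independent spurious units to be compensated by an equal corank in $\tau(\Dl{Y})$; one needs a genuine local--global existence argument (realizing every element of the orthogonal complement of the image of $\Sing{Y}$ by an actual function in $\sing{Y}$, via approximation/Chebotarev and reciprocity), and this is precisely what \cite[Lemma~2.1]{Czo01} supplies. Deferring to that lemma is defensible --- it is the same citation the paper uses --- but you should state plainly that your argument proves (1), half of (2), and the implication $(1)\wedge(2)\Rightarrow(3)$, with the decisive half of (2) imported from the reference rather than derived.
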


An identity similar to \eqref{it_rk2E_Y} above can be also proved for a complete curve.

\begin{lem}\label{lem_rk2E_K}
$\rk \SingK = 1 + \rk \Pic^0 X$.
\end{lem}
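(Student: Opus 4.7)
My plan is to push the computation onto divisors. Consider the homomorphism
\[
\psi\colon \SingK \longrightarrow \frac{\dv(\dot K)\cap 2\Div K}{2\,\dv(\dot K)}, \qquad \lambda\squares{K}\longmapsto \dv\lambda + 2\,\dv(\dot K).
\]
By the very definition of $\sing{X}$, every representative $\lambda$ has $\dv\lambda\in 2\Div K$, and replacing $\lambda$ by $\lambda\nu^2$ shifts $\dv\lambda$ by $2\dv\nu\in 2\,\dv(\dot K)$; hence $\psi$ is well-defined, and surjectivity is immediate.

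The first task is to compute $\ker\psi$. A class $\lambda\squares{K}$ lies in the kernel precisely when $\dv\lambda = 2\dv\nu$ for some $\nu\in\dot K$, i.e.\ when $\lambda/\nu^2$ has trivial divisor. Since $X$ is complete, the only such elements are the nonzero constants, so $\lambda\in \dot{\kk}\cdot\squares{K}$. Moreover $\dot{\kk}\cap\squares{K}=\squares{\kk}$ (a constant which becomes a square in $K$ has a square root of trivial divisor, which must itself be constant), whence $\ker\psi\cong \sqgd{\kk}$ has $\FF_2$-rank one.

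The second task is to identify the target with $(\Pic^0 K)[2]$. Send a class represented by $\dv\lambda = 2D$, with $D\in\Div K$, to $[D]\in\Pic K$. Since $\dv\lambda$ has degree zero on a complete curve and $D$ is uniquely determined by $2D$ inside the torsion-free group $\Div K$, the map is well-defined modulo $2\,\dv(\dot K)$, and it is plainly injective. Because $2[D]=0$ in $\Pic K$ forces $\deg D = 0$, the image lies in $(\Pic^0 K)[2]$; conversely any $[D]\in(\Pic^0 K)[2]$ is hit by $\dv\lambda := 2D$, which belongs to $\sing{X}$ by construction.

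Finally, since $\kk$ is a finite field, $\Pic^0 K$ is a finite abelian group, so the $2$-rank of its $2$-torsion subgroup agrees with the $2$-rank of the group itself. Combining the two steps yields
\[
\rk\SingK \;=\; 1 + \rk(\Pic^0 K)[2] \;=\; 1 + \rk\Pic^0 K,
\]
as claimed. The only delicate bookkeeping is the identification of the middle quotient with $(\Pic^0 K)[2]$; the rest is direct manipulation of the principal-divisor sequence.
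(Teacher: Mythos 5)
Your proof is correct and is essentially the paper's argument: both compute the image and kernel of the ``half the divisor'' map from $\SingK$ into the $2$-torsion of $\Pic^0 K$, finding kernel $\sfrac{\dot{\kk}\squares{K}}{\squares{K}}\cong\sqgd{\kk}$ of rank one and image all of $(\Pic^0 K)[2]$, whose $\FF_2$-rank equals $\rk\Pic^0 K$ since $\Pic^0 K$ is finite. The intermediate quotient $\bigl(\dv(\dot K)\cap 2\Div K\bigr)/2\dv(\dot K)$ you insert is only a bookkeeping device; the composite map and all the essential verifications coincide with the paper's.
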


\begin{proof}
Let $H$ be the subgroup of $\Pic^0 X$ consisting of elements of order $2$. The map
\[
\singK \to H, \quad \lambda\mapsto \half\dv_K\lambda = \sum_{\gp\in X}\half\ord_\gp\lambda\cdot \gp
\]
is a surjective homomorphism with the kernel $\units{\kk}\cdot \squares{K}$. Thus, $\rk \sfrac{\singK}{\units{\kk}\squares{K}}= \rk \Pic^0 X$. The groups $\sfrac{\units{\kk}\squares{K}}{\squares{K}}$ and $\sqgd{\kk}$ are isomorphic and the $2$-rank of $\sqgd{\kk}$ equals one. This proves the lemma.
\end{proof}

Now, we consider a case, when we have two open subsets $Z\subset Y\subset X$.

\begin{lem}\label{lem_rk2EZ_vs_rk2EY}
If $\emptyset \neq Z\subset Y\subsetneq X$ are two proper open subsets of $X$, then
\begin{enumerate}
\item $\rk \Cl\CO_Z = \rk \Cl\CO_Y - \rk\bigl\langle\{ \gp + 2\Cl\CO_Y\st \gp\in Y\setminus Z\}\bigr\rangle$;
\item $\rk \Sing{Z} = \rk \Cl\CO_Y - \rk\bigl\langle\{ \gp + 2\Cl\CO_Y\st \gp\in Y\setminus Z\}\bigr\rangle + \card{X\setminus Z}$.
\end{enumerate}
\end{lem}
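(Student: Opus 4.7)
The plan is to reduce everything to the standard exact sequence relating the class groups of $\CO_Y$ and $\CO_Z$ under the inclusion $\CO_Y\subset \CO_Z$. Since $Z\subset Y$, we have $\CO_Y=\bigcap_{\gp\in Y}\CO_\gp\subseteq \bigcap_{\gp\in Z}\CO_\gp=\CO_Z$, and $\CO_Z$ is obtained from the Dedekind domain $\CO_Y$ by inverting the uniformizers of the (finitely many) primes in $Y\setminus Z$. The classical sequence for class groups of Dedekind domains under such a localization reads
\[
\bigoplus_{\gp\in Y\setminus Z}\ZZ\xrightarrow{\ \phi\ }\Cl\CO_Y\longrightarrow \Cl\CO_Z\longrightarrow 0,
\]
where $\phi$ sends the generator at $\gp$ to the ideal class $[\gp]\in \Cl\CO_Y$; surjectivity on the right follows because every prime of $\CO_Z$ extends uniquely to a prime of $\CO_Y$, and the kernel of the second map is generated by precisely those classes that become trivial after inverting the points of $Y\setminus Z$.

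To extract 2-ranks, I tensor the above sequence with $\FF_2$. Since $-\otimes_\ZZ\FF_2$ is right exact, we get the exact sequence
\[
\bigoplus_{\gp\in Y\setminus Z}\FF_2\xrightarrow{\ \bar\phi\ }\Cl\CO_Y/2\Cl\CO_Y\longrightarrow \Cl\CO_Z/2\Cl\CO_Z\longrightarrow 0,
\]
where $\bar\phi$ sends the generator indexed by $\gp$ to the residue class $\gp+2\Cl\CO_Y$. Because the groups in question are finite, the $2$-rank of an abelian group $A$ coincides with $\dim_{\FF_2}A/2A$, so reading off dimensions gives
\[
\rk \Cl\CO_Z \;=\; \rk\Cl\CO_Y \;-\; \dim_{\FF_2}\operatorname{im}\bar\phi \;=\; \rk\Cl\CO_Y \;-\; \rk\bigl\langle\{\gp+2\Cl\CO_Y\st \gp\in Y\setminus Z\}\bigr\rangle,
\]
which is exactly part (1).

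For part (2), since $Z$ is a proper nonempty open subset of $X$, Proposition~\ref{prop_old_result}\eqref{it_rk2E_Y} applies to $Z$ and yields $\rk\Sing{Z}=\rk\Cl\CO_Z+\card{X\setminus Z}$. Plugging in the formula from (1) gives the assertion of (2) immediately. There is no serious obstacle here: the only thing to be careful about is the direction of the inclusion (removing points from $Y$ to get $Z$ \emph{enlarges} the ring, since fewer regularity conditions are imposed), and the right-exactness of tensoring with $\FF_2$, both of which are standard.
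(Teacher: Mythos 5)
Your proof is correct and follows essentially the same route as the paper: the paper likewise uses the natural surjection $\Cl\CO_Y\onto\Cl\CO_Z$ induced by $Z\subset Y$, observes that the kernel of the induced map on mod-$2$ quotients is generated by the classes $\gp+2\Cl\CO_Y$ for $\gp\in Y\setminus Z$, and then deduces part (2) from Proposition~\ref{prop_old_result}. Your packaging of this via the localization exact sequence and right-exactness of $-\otimes_\ZZ\FF_2$ is just a slightly more formal rendering of the same argument.
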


\begin{proof}
Since $Z\subset Y$, hence $\CO_Z\supset \CO_Y$ and by functoriality, there is a natural morphism 
$\Cl\CO_Y\to \Cl\CO_Z$. It is clearly an epimorphism, since a class of a divisor $\sum_{\gp\in Z} n_\gp \gp$ is the image of the class of any divisor of a form $\bigl(\sum_{\gp\in Z}n_\gp\gp + \sum_{\gq\in Y\setminus Z} n_\gq\gq\bigr)$. This epimorphism induces an epimorphism of the quotient groups $\sfrac{\Cl\CO_Y}{2\Cl\CO_Y}\onto \sfrac{\Cl\CO_Z}{2\Cl\CO_Z}$, whose kernel is generated by the set $\{\gp + 2\Cl\CO_Y\st \gp\in Y\setminus Z\}$. It proves the first assertion of the lemma and the second follows immediately from the first one and Proposition~\ref{prop_old_result}.
\end{proof}

It is natural to compare the $2$-rank of $\Pic^0 X$ with the $2$-rank of the class group $\Cl\CO_Y$ of a proper open subset $Y\subsetneq X$. Below we formulate two relevant results for the case when $Y= X\setminus \{\gp\}$.

\begin{lem}\label{lem_EK=Ep}
Let $\zeta\in \kk$ be a fixed generator of the square class group $\sqgd{\kk}$ of the full field of constants of $K$. If $\gp\in X$ is a point of an odd degree, then
\begin{enumerate}
\item\label{it_E_K=E_gp} $\SingK = \Sing{\gp} = \langle\zeta\rangle\oplus\Dl{\gp}$;
\item $\rk \Pic^0K = \rk \Cl\CO_\gp$.
\end{enumerate}
\end{lem}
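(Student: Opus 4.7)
The plan is to prove the two assertions in sequence, with (2) following almost immediately from (1) together with the rank formulas already established.

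For the identity $\SingK = \Sing{\gp}$, I would start with the trivial inclusion $\SingK \subseteq \Sing{\gp}$ and exploit the fact that $\gp$ has odd degree to get the reverse. The key observation is that for any $\lambda \in \dot{K}$, the principal divisor $\dv_K \lambda$ has degree zero on the complete curve $X$, so
\[
\ord_\gp \lambda \cdot \deg \gp = -\sum_{\gq \neq \gp} \ord_\gq \lambda \cdot \deg \gq.
\]
If $\lambda \in \sing{\gp}$, the right-hand side is even, and since $\deg \gp$ is odd, $\ord_\gp \lambda$ must be even as well, giving $\lambda \in \singK$.

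The heart of the argument is the decomposition $\SingK = \langle \zeta \rangle \oplus \Dl{\gp}$. I would first check that $\zeta$, viewed in $\sqgd{K_\gp}$, represents the class of the $\gp$-primary unit $u_\gp$. This is the technical step: since $\zeta$ is a unit at $\gp$, it suffices to show that its image in the residue field $K(\gp)$ is a non-square. The residue field is a finite extension of $\kk$ of degree $\deg\gp$, which is odd, and a direct computation with the norm map $K(\gp)^*/K(\gp)^{*2} \to \kk^*/\kk^{*2}$ (or equivalently, by evaluating $\zeta^{(|K(\gp)|-1)/2}$) shows that a non-square of $\kk$ stays a non-square in an odd-degree extension. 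Hence $\zeta \equiv u_\gp \pmod{\squares{K_\gp}}$. Granting this, any $\lambda \in \SingK$ has even order at $\gp$, so its image in $\sqgd{K_\gp}$ is either $1$ or $u_\gp$; multiplying by $\zeta$ in the latter case produces an element of $\Dl{\gp}$, yielding $\SingK \subseteq \langle\zeta\rangle \cdot \Dl{\gp}$. The reverse inclusion is clear, and the sum is direct because $\zeta \notin \squares{K_\gp}$, so $\zeta \notin \Dl{\gp}$.

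The second assertion is then just bookkeeping: by Lemma~\ref{lem_rk2E_K} we have $\rk\SingK = 1 + \rk\Pic^0 K$, while Proposition~\ref{prop_old_result}\eqref{it_rk2E_Y}, applied to $Y = X\setminus\{\gp\}$, gives $\rk \Sing{\gp} = \rk \Cl\CO_\gp + 1$. Since part (1) identifies these two groups, subtracting $1$ from both expressions yields $\rk\Pic^0 K = \rk\Cl\CO_\gp$.

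The only step I expect to require care is verifying that $\zeta$ represents $u_\gp$ in $\sqgd{K_\gp}$; everything else is either the degree-zero identity for principal divisors or a direct appeal to the earlier rank formulas.
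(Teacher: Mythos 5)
Your proof is correct and follows the paper's proof almost exactly: the degree-zero argument for $\SingK=\Sing{\gp}$ and the rank bookkeeping for assertion (2) via Lemma~\ref{lem_rk2E_K} and Proposition~\ref{prop_old_result}\eqref{it_rk2E_Y} are identical to what the paper does. The only divergence is in the decomposition $\SingK=\langle\zeta\rangle\oplus\Dl{\gp}$: the paper gets it by a dimension count ($\rk\Sing{\gp}=\rk\Dl{\gp}+1$ from Proposition~\ref{prop_old_result}, plus the observation that $\zeta\in\Sing{\gp}\setminus\Dl{\gp}$), whereas you argue element-wise that $\zeta$ represents the $\gp$-primary unit $u_\gp$ and that multiplying any $\lambda\in\SingK$ by $\zeta$ when needed lands it in $\Dl{\gp}$. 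Both hinge on the same key fact --- that a non-square constant remains a non-square in the residue field of an odd-degree point --- and your explicit verification of this (via $\zeta^{(\card{K(\gp)}-1)/2}=(-1)^{\deg\gp}$) is sound and in fact makes precise a claim the paper only states in passing; note that the paper's parenthetical wording of that fact (``$\zeta$ is a local square \ldots{} if and only if this point has an odd degree'') has the parity reversed, and your version is the correct one. Your route costs a few extra lines but avoids invoking the rank formula for the decomposition itself, using it only where it is genuinely needed, in assertion (2).
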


\begin{proof}
Take an arbitrary element $\lambda\in \sing{\gp}$. The degree of the principal divisor $\dv_K\lambda$ is $0$. Thus, we have
\[
\ord_\gp\lambda\cdot \deg \gp = - \sum_{\gq\neq\gp} \ord_\gq\lambda \cdot \deg \gq.
\]
Now, $\ord_\gq\lambda$ is even for every $\gq\neq\gp$, since $\lambda \in \sing{\gp}$. On the other hand $\deg\gp$ is odd by the assumption. It follows that $\ord_\gp\lambda$ is even, too. Hence $\lambda\in \singK$ and so we proved that $\Sing{\gp}\subseteq \SingK$. The other inclusion is trivial and the equality $\Sing{\gp} = \langle\zeta\rangle\oplus\Dl{\gp}$ follows from Proposition~\ref{prop_old_result} and the fact that $\zeta$ is not a local square at a given point if and only if this point has an odd degree. This proves~\eqref{it_E_K=E_gp}. Now, the other assertion follows immediately from Lemma~\ref{lem_rk2E_K} and Proposition~\ref{prop_old_result}.\eqref{it_rk2E_Y}.
\end{proof}

\begin{prop}\label{prop_rk2ClO_gp}
If $\gp\in X$ is any point, then
\[
\rk \Cl\CO_\gp = 
\begin{cases}
\rk \Pic^0 X,&\text{if $\gp\notin 2\PicK$}\\
1+\rk \Pic^0 X,&\text{if $\gp\in 2\PicK$.}
\end{cases}
\]
\end{prop}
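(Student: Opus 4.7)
The plan is to reduce the computation of $\rk\Cl\CO_\gp$ to a comparison between the square-class groups $\Sing{\gp}$ and $\SingK$. First, I would apply Proposition~\ref{prop_old_result}.\eqref{it_rk2E_Y} to the proper open subset $Y = X\setminus \{\gp\}$ (so that $\card{X\setminus Y}=1$), which yields $\rk \Sing{\gp} = \rk \Cl\CO_\gp + 1$; then Lemma~\ref{lem_rk2E_K} supplies $\rk \SingK = 1 + \rk \Pic^0 K$. So the claim becomes: $\rk \Sing{\gp} - \rk \SingK$ equals $1$ when $\gp \in 2\Pic K$ and equals $0$ otherwise.

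Next, since $\SingK \subseteq \Sing{\gp}$ directly from the definitions, the assignment $\lambda \mapsto \ord_\gp \lambda \pmod{2}$ descends to an $\FF_2$-linear embedding $\Sing{\gp}/\SingK \hookrightarrow \ZZ/2\ZZ$, whose kernel is precisely $\SingK$. Consequently the difference of $2$-ranks is $0$ or $1$, and the question reduces to whether there exists $\lambda \in \dot K$ that is a local square at every $\gq \neq \gp$ while having odd valuation at $\gp$.

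Such a $\lambda$ exists exactly when some principal divisor has the form $(2k+1)\gp + 2D$, which translated into $\Pic K$ reads $(2k+1)\gp \in 2\Pic K$; since $2k+1$ is odd, this is equivalent to $\gp \in 2\Pic K$. Conversely, given a relation $\gp = 2D'$ in $\Pic K$, one writes $\gp - 2D' = \dv\lambda$ and reads off the coefficients: all those at $\gq \neq \gp$ are even, and the one at $\gp$ is $1 - 2\ord_\gp D'$, still odd. This produces the required $\lambda$ even in the case when $D'$ contains $\gp$ in its support. Assembling the two cases then yields the stated formula. The only delicate point I anticipate is the back-and-forth between principal divisors and classes in $\Pic K/2\Pic K$, specifically the observation that odd scalar multiplication acts trivially on elements of order dividing $2$; once this is set up, the rest is direct bookkeeping.
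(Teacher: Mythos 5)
Your argument is correct, and it reaches the conclusion by a genuinely different route than the paper's. Both proofs start from the same two rank identities, $\rk\Sing{\gp}=\rk\Cl\CO_\gp+1$ (Proposition~\ref{prop_old_result}) and $\rk\SingK=1+\rk\Pic^0 K$ (Lemma~\ref{lem_rk2E_K}). The paper then obtains the upper bound $\rk\Cl\CO_\gp\le 1+\rk\Pic^0 K$ from the exact sequence $0\to\Pic^0 K\to\Cl\CO_\gp\to\ZZ_d\to 0$ of \cite[Proposition~14.1]{Ros02} with $d=\deg\gp$, and settles the dichotomy by citing Proposition~\ref{prop_even<->ord=1} to decide whether $\SingK=\Sing{\gp}$ or $\SingK\subsetneq\Sing{\gp}$. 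You instead extract both the upper bound and the dichotomy from the single observation that $\ord_\gp\bmod 2$ induces an injection of $\sfrac{\Sing{\gp}}{\SingK}$ into $\sfrac{\ZZ}{2\ZZ}$, and then you re-derive inline the content of Proposition~\ref{prop_even<->ord=1} (surjectivity of this map is equivalent to $\gp\in 2\Pic K$) by essentially the same divisor manipulation the paper uses to prove that proposition, including the correct handling of the case where $\gp$ occurs in the support of $D'$. Your version is more self-contained: it avoids the degree exact sequence entirely (and with it any worry about exactness of the mod-$2$ reduction), at the cost of duplicating an argument the paper already has on record. One phrasing slip should be fixed: the condition characterizing surjectivity is the existence of $\lambda\in\sing{\gp}$, i.e.\ $\lambda$ of \emph{even order} at every $\gq\neq\gp$ and odd order at $\gp$; you write ``local square at every $\gq\neq\gp$'', which is a strictly stronger requirement and, read literally, would falsify your ``exactly when'' equivalence with the existence of a principal divisor of the form $(2k+1)\gp+2D$. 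Since your actual computation only uses parities of valuations, the proof stands once the wording is corrected.
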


The proof of this proposition will be postponed till the next section.

\section{2-divisibility of classes of prime divisors}\label{sec_2-div}
This section is fully devoted to the following problem: let $\gp\in X$ be a point, when does the class of $\gp$ in $\PicK$ is divisible by $2$ (i.e. lies in $2\PicK$)? Points having this property will be subsequently called \term{$2$-divisible} or shorty, albeit less formally, \term{even}. The results of this section, not only have direct applications in the rest of this paper, but (at least some of them) are somehow interesting by themselves. Let us begin with the following basic observation.

\begin{obs}\label{obs_even_pt->even_deg}
If $\gp\in X$ is an even point, then its degree $\deg\gp$ is an even integer.
\end{obs}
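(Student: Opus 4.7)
The claim is essentially immediate from the fact that the degree is well-defined on the Picard group. The plan is to apply the degree homomorphism $\deg\colon \Pic K\to \ZZ$ to the hypothesis that $[\gp]\in 2\Pic K$. Since principal divisors have degree zero, the degree map $\Div K\to \ZZ$ defined by $\sum_\gq n_\gq\gq\mapsto \sum_\gq n_\gq\deg\gq$ factors through $\Pic K$, giving a well-defined group homomorphism. Assuming $[\gp] = 2[D]$ in $\Pic K$ for some divisor class $[D]$, we get
\[
\deg \gp = \deg[\gp] = 2\deg[D] \in 2\ZZ,
\]
so $\deg \gp$ is even.

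Equivalently, one can unfold this at the level of divisors: the hypothesis $\gp\in 2\Pic K$ says that there exist a divisor $D\in \Div K$ and a rational function $\lambda\in \dot{K}$ such that $\gp = 2D + \dv_K\lambda$, and taking degrees of both sides gives $\deg\gp = 2\deg D$. There is no real obstacle here; the only tacit ingredient is that the degree map on principal divisors vanishes, which is standard for smooth projective curves.
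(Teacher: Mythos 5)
Your proposal is correct and follows exactly the same route as the paper: the paper also derives the observation from the fact that the degree map $\deg\colon \Div K\onto \ZZ$ vanishes on principal divisors and hence induces a well-defined homomorphism $\deg\colon \Pic K\onto \ZZ$, which sends $2\Pic K$ into $2\ZZ$. Nothing is missing.
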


The assertion of this observation follows immediately from the fact (see e.g. \cite[Corollary~VII.7.10]{Lor96}), that the epimorphism $\deg: \Div K\onto \ZZ$ factors over the subgroup of principal divisors, inducing a well defined group epimorphism $\deg: \PicK\onto \ZZ$. It is well known (see e.g. \cite[Proposition~VII.7.12]{Lor96}), that for a field of rational functions this map is actually an isomorphism. Hence, in such a field, even points are precisely the points of even degrees. Of course, it is not so in general. For example, if $K$ is the function field of an elliptic curve over $\FF_3$ given in Weierstrass normal form by the polynomial $y^2-x^3+x$, then there are exactly $6$ points of degree $2$ and $12$ points of degree $4$ in $K$ but none (!) of them is $2$-divisible in $\PicK$ (verified using computer algebra system\footnote{The source codes for Magma of all the counter examples are available from the second author's web page at \url{http://z2.math.us.edu.pl/perry/papers}.} Magma \cite{BCP97}). Thus, we are forced to search for some other criteria of $2$-divisibility.

\begin{prop}\label{prop_even<->ord=1}
A point $\gp\in X$ is $2$-divisible in $\PicK$ if and only if there exists an element $\lambda \in \sing\gp$ such that $\ord_\gp \lambda\equiv 1\pmod{2}$.
\end{prop}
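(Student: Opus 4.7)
The plan is to translate the condition $\gp\in 2\Pic K$ into a divisorial statement about principal divisors, and then read off from that statement a function living in $\sing\gp$ with odd order at $\gp$. Both directions amount to nothing more than unpacking the definition of $2\Pic K$ in terms of divisors and of $\sing\gp$ as ``functions with even valuation away from $\gp$''.

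For the forward implication, I would assume $\gp\in 2\Pic K$, so that there is a divisor $E\in \Div K$ with $\gp - 2E$ principal, say $\gp - 2E = \dv_K\mu$ for some $\mu\in\dot K$. Then for every $\gq\neq\gp$ one has $\ord_\gq\mu = -2\ord_\gq E\even$, so $\mu\in\sing\gp$, while $\ord_\gp\mu = 1 - 2\ord_\gp E$ is odd. This is the desired $\lambda$.

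For the converse, I would start with $\lambda\in\sing\gp$ whose order at $\gp$ is an odd integer $2m+1$. Because $\ord_\gq\lambda$ is even for every $\gq\neq\gp$, the divisor
\[
D := \sum_{\gq\neq\gp}\tfrac{\ord_\gq\lambda}{2}\cdot\gq
\]
has integer coefficients, and $\dv_K\lambda = (2m+1)\gp + 2D$. Passing to $\Pic K$ and using that the class of a principal divisor is zero, this gives $(2m+1)[\gp] = -2[D] \in 2\Pic K$. Since $2m\,[\gp]$ already lies in $2\Pic K$, subtracting it yields $[\gp]\in 2\Pic K$.

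I do not foresee a genuine obstacle: the statement is essentially a dictionary translation between ``$2$-divisible in $\Pic K$'' and ``admits a principal divisor of the form $\gp + 2D$ (mod parity)''. The only point that needs a moment of care is verifying that the auxiliary divisor $D$ built in the converse has integer coefficients, which is exactly the content of the hypothesis $\lambda\in\sing\gp$.
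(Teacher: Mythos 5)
Your proposal is correct and follows essentially the same route as the paper: both directions simply unpack the definition of $2\Pic K$ in terms of a principal divisor of the form $\gp+2\gD$ and read off the parity of the orders. The paper's converse is phrased as $\gp = 2\bigl(k\gp+\sum n_\gq\gq\bigr)-\dv_K\lambda$, which is the same bookkeeping as your $(2m+1)[\gp]=-2[D]$ step.
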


\begin{proof}
Assume that $\gp$ is an even point, this means that
\[
\gp +\dv_K\lambda = \sum_{\gq\in X} 2n_\gq\cdot \gq
\]
for some $n_\gq\in \ZZ$ almost all equal $0$ and some element $\lambda\in K$. It is clear that $\lambda$ satisfies the assertion.

Conversely, assume the existence of $\lambda\in \sing\gp$ of an odd order at $\gp$, say $\ord_\gp\lambda = 2k+1$. Write the divisor of $\lambda$ 
\[
\dv_K\lambda = (2k+1)\gp + \sum_{\substack{\gq\in X\\ \gq\neq \gp}} 2n_\gq \gq,
\]
for some $k\in \ZZ$ and $n_\gq\in \ZZ$ almost all equal $0$. Therefore the following equality holds in the Picard group of $K$
\[
\gp = \dv_K \lambda - 2\Bigl( k\gp + \sum_{\substack{\gq\in X\\ \gq\neq \gp}} n_\gq \gq\Bigr).
\]
In particular $\gp\in 2\PicK$, as claimed
\end{proof}

We will need the following, rather basic, fact from group theory, which we believe is well know to the experts but we are not aware of any convenient reference.

\begin{lem}\label{lem_2ranks}
Let $G$ be a finite abelian group. If~$H$ is a subgroup of~$G$, then
\[
\rk \sfrac{G}{H}\geq \rk G - \rk H.
\]
\end{lem}

\begin{proof}
The $2$-rank of a finite abelian group~$A$ is just the dimension of the $\FF_2$-vector space $A\otimes_{\ZZ} \FF_2$. Take a short exact sequence
\[
0\to H\to G \to \sfrac{G}{H}\to 0
\]
and tensor it with~$\FF_2$. We obtain the following exact sequence of $\FF_2$-vector spaces:
\[
H\otimes_{\ZZ}\FF_2\to G\otimes_{\ZZ}\FF_2\to \sfrac{G}{H}\otimes_{\ZZ}\FF_2\to 0.
\]
Let~$I$ be the image of the first homomorphism in the above sequence. Clearly $\dim_{\FF_2} (H\otimes_{\ZZ}\FF_2)\geq \dim_{\FF_2} I$ and we have
\[
\dim_{\FF_2} \bigl(G\otimes_{\ZZ} \FF_2\bigr) - \dim_{\FF_2} (H\otimes_{\ZZ}\FF_2)\leq 
\dim_{\FF_2} \bigl(G\otimes_{\ZZ} \FF_2\bigr) - \dim_{\FF_2} I = 
\dim_{\FF_2} (\sfrac{G}{H}\otimes_{\ZZ}\FF_2).
\]
This proves the lemma.
\end{proof}

We are now ready to prove Proposition~\ref{prop_rk2ClO_gp}.

\begin{proof}[Proof of Proposition~\ref{prop_rk2ClO_gp}]
Let $d:= \deg\gp$ be the degree of $\gp$. It follows from \cite[Proposition~14.1]{Ros02} that the following sequence is exact
\[
0\to \Pic^0 X\to \Cl\CO_\gp\to \ZZ_d\to 0.
\]
Therefore $\sfrac{\Cl\CO_\gp}{\Pic^0 X}$ is isomorphic to~$\ZZ_d$ and so their $2$-ranks are equal. Lemma~\ref{lem_2ranks} asserts that
\[
1\geq \rk\ZZ_d\geq \rk \Cl\CO_\gp - \rk\Pic^0 X.
\]
Consequently
\begin{equation}\label{eq_rk_ineq}
\rk \Cl\CO_\gp \leq 1 + \rk \Pic^0 X.
\end{equation}
Lemma~\ref{lem_rk2E_K} asserts that $\rk \SingK = 1 + \rk \Pic^0 X$, while Proposition~\ref{prop_old_result} states that $\rk \Cl\CO_\gp = \rk \Sing{\gp}-1$. Clearly $\SingK\subseteq \Sing{\gp}$. If $\gp\notin 2\PicK$, then $\SingK = \Sing{\gp}$ by Proposition~\ref{prop_even<->ord=1}, hence
\[
\rk \Cl\CO_\gp = \rk \Pic^0 X.
\]
On the other hand, if $\gp\in 2\PicK$, then $\SingK\subsetneq \Sing{\gp}$, again by Proposition~\ref{prop_even<->ord=1}. Thus
\[
\rk \Cl\CO_\gp > \rk \Pic^0 X
\]
and the assertion follows from Eq.~\eqref{eq_rk_ineq}.
\end{proof}

One immediate consequence of Proposition~\ref{prop_rk2ClO_gp} is the following criterion for $2$-divisibility.

\begin{prop}\label{prop_even<=>Dl=EE}
Let $\gp\in X$ be any point, then $\gp$ is $2$-divisible in $\PicK$ if and only if every function having even order everywhere on $X$ is a local square at $\gp$ \textup(i.e. if $\SingK = \Dl{\gp}$\textup).
\end{prop}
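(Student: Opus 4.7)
The plan is a pure rank count in the square-class group of $K$. First, I would note that the inclusion $\Dl{\gp} \subseteq \SingK$ is automatic: every element of $\Dl{\gp}$ is, by definition, a local square at $\gp$ (hence has even order at $\gp$) and belongs to $\sing{X\setminus\{\gp\}}$ (even order everywhere else), so it lies in $\sing{X}$. Since both groups are $\FF_2$-vector spaces sitting inside $\sqgd{K}$, proving the equality reduces to proving equality of their $2$-ranks.

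Next, I would compute those two ranks using the results already gathered in Section~2. Lemma~\ref{lem_rk2E_K} gives
\[
\rk \SingK = 1 + \rk \Pic^0 K,
\]
while Proposition~\ref{prop_old_result}\eqref{it_rk2Delta}, applied to the proper open subset $Y = X\setminus\{\gp\}$, yields $\rk \Dl{\gp} = \rk \Cl \CO_\gp$. So the condition $\SingK = \Dl{\gp}$ becomes the numerical condition $\rk \Cl \CO_\gp = 1 + \rk \Pic^0 K$.

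At this point, the whole proposition falls out of the previous proposition: Proposition~\ref{prop_rk2ClO_gp} states precisely that $\rk \Cl\CO_\gp$ takes the value $1 + \rk \Pic^0 K$ exactly when $\gp \in 2\Pic K$, and equals $\rk \Pic^0 K$ otherwise. Chaining the equivalences $\SingK = \Dl{\gp} \iff \rk \Dl{\gp} = \rk \SingK \iff \rk \Cl \CO_\gp = 1 + \rk \Pic^0 K \iff \gp \in 2\Pic K$ closes the argument.

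There is really no serious obstacle here — the hard work was done in Proposition~\ref{prop_rk2ClO_gp}. The only small point to be careful about is confirming the trivial inclusion $\Dl{\gp} \subseteq \SingK$ (not $\Dl{\gp} \subseteq \Sing{\gp}$, which is the version stated in the definition), so that the chain of equalities of subgroups of the same ambient finite $\FF_2$-vector space really can be reduced to a dimension count. Once that is observed, the proof is essentially a two-line combination of Lemma~\ref{lem_rk2E_K}, Proposition~\ref{prop_old_result}\eqref{it_rk2Delta}, and Proposition~\ref{prop_rk2ClO_gp}.
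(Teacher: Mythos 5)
Your argument is correct and is essentially identical to the paper's own proof: both view $\Dl{\gp}$ as an $\FF_2$-subspace of $\SingK$ and reduce the equality to a rank count via Lemma~\ref{lem_rk2E_K}, Proposition~\ref{prop_old_result}\eqref{it_rk2Delta}, and Proposition~\ref{prop_rk2ClO_gp}. Your extra remark checking the inclusion $\Dl{\gp}\subseteq\SingK$ is already built into the paper's definition of $\dl{Y}$ (the second form $\singK\cap\bigcap_{\gq\notin Y}\squares{K_\gq}$), so nothing further is needed.
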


\begin{proof}
Think of $\Dl{\gp}$ as of a subspace of a $\FF_2$-linear space $\SingK$. Lemma~\ref{lem_rk2E_K} asserts that $\rk\SingK = 1 +\rk\Pic^0 X$, while $\rk\Dl{\gp} = \rk \Cl\CO_\gp$ by Proposition~\ref{prop_old_result}. Now, it follows from Proposition~\ref{prop_rk2ClO_gp} that $\rk\Cl\CO_\gp = 1+\rk \Pic^0 X= \rk \SingK$ if and only if $\gp\in 2\PicK$. Consequently, $\dim_{\FF_2} \Dl{\gp} = \dim_{\FF_2}\SingK$, and so $\Dl{\gp}$ is the full space $\SingK$, if and only if $\gp$ is even.
\end{proof}

So far we have been considering $2$-divisibility in the Picard group of the complete curve. The next proposition deals with $2$-divisibility in $\Pic\CO_Y$ (or equivalently in $\varCl\CO_Y$), that is over some proper open subset $Y$ of $X$.

\begin{prop}\label{prop_Delta<Kp^2}
Let $\emptyset \neq Y\subsetneq X$ be a proper open subset and $\gp\in Y$. Then $\gp$ is $2$-divisible in $\Cl\CO_Y$ if and only if $\dl{Y}\subset \squares{K_\gp}$.
\end{prop}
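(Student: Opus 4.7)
The plan is to compare $\Dl{Y}$ with $\Dl{Z}$ for the smaller open subset $Z := Y \setminus \{\gp\}$, identifying $\Dl{Z}$ with the kernel of the local-reduction map $\Dl{Y} \to \sqgd{K_\gp}$. The proposition then drops out of a rank count based on Proposition~\ref{prop_old_result} and Lemma~\ref{lem_rk2EZ_vs_rk2EY}. The degenerate case $Y = \{\gp\}$ has to be noted separately, but it is trivial: $\CO_\gp$ is a discrete valuation ring, so $\Cl\CO_\gp = 0$ and $\rk \Dl{Y} = 0$ by Proposition~\ref{prop_old_result}, whence both sides of the biconditional hold vacuously.

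First I would establish the set-theoretic identity $\dl{Z} = \dl{Y} \cap \squares{K_\gp}$. This is essentially bookkeeping: $X \setminus Z = (X \setminus Y) \cup \{\gp\}$, so ``local square outside $Z$'' means ``local square outside $Y$, plus a square at $\gp$''; and because a local square at $\gp$ has even order there, combining with evenness on $Z$ recovers evenness on all of $Y$. Passing to $\squares{K}$-classes identifies $\Dl{Z}$ with the kernel of the homomorphism $\Dl{Y} \to \sqgd{K_\gp}$ induced by $K \hookrightarrow K_\gp$. Its image lies in the rank-one $\FF_2$-subspace $\langle u_\gp\rangle \subset \sqgd{K_\gp}$, because every $\lambda \in \dl{Y}$ already has even order at $\gp \in Y$.

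The final step is a dimension count. Combining $\rk \Dl{Y} = \rk \Cl\CO_Y$ and $\rk \Dl{Z} = \rk \Cl\CO_Z$ from Proposition~\ref{prop_old_result} with Lemma~\ref{lem_rk2EZ_vs_rk2EY}, one obtains
\[
\rk\bigl(\text{image of } \Dl{Y} \text{ in } \sqgd{K_\gp}\bigr) = \rk \Dl{Y} - \rk \Dl{Z} = \rk\bigl\langle \gp + 2\Cl\CO_Y\bigr\rangle,
\]
which equals $0$ precisely when $\gp \in 2\Cl\CO_Y$ and equals $1$ otherwise. Since the image lives in a one-dimensional $\FF_2$-space, its triviality is equivalent to $\dl{Y} \subseteq \squares{K_\gp}$, so both implications of the proposition come out simultaneously. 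Apart from checking the set-theoretic identity in the first step, the argument is just an assembly of rank formulas already recorded in the preliminaries, and I do not foresee any serious obstacle.
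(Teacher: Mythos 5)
Your argument is correct, and it is genuinely different from the paper's in one of the two directions. For the implication ``$\dl{Y}\subset\squares{K_\gp}$ implies $\gp\in 2\Cl\CO_Y$'' you do exactly what the paper does: pass to $Z=Y\setminus\{\gp\}$, observe $\Dl{Z}=\Dl{Y}$, and invoke Proposition~\ref{prop_old_result}.\eqref{it_rk2Delta} together with Lemma~\ref{lem_rk2EZ_vs_rk2EY}. The novelty is in the converse. The paper proves it by a reciprocity argument: from $\dv_{\CO_Y}\lambda=\gp+2\gD$ one checks that the quaternion algebra $\quat{\lambda,\mu}{K_\gq}$ splits at every $\gq\neq\gp$ for each $\mu\in\dl{Y}$, and Hilbert reciprocity forces splitting at $\gp$, which (since $\ord_\gp\lambda$ is odd and $\ord_\gp\mu$ is even) forces $\mu\in\squares{K_\gp}$. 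You instead extract both implications from a single rank count: identifying $\Dl{Z}$ with the kernel of the localization map $\Dl{Y}\to\sqgd{K_\gp}$, whose image sits inside the one-dimensional space $\langle u_\gp\rangle$, and computing $\rk\Dl{Y}-\rk\Dl{Z}=\rk\langle\gp+2\Cl\CO_Y\rangle$ via the same two cited results. This is a cleaner and more uniform packaging; its only cost is that the global input (reciprocity) is hidden inside the formula $\rk\Dl{Y}=\rk\Cl\CO_Y$ imported from \cite{Czo01}, whereas the paper's forward direction makes that input explicit. Two minor remarks: the identity $\dl{Z}=\dl{Y}\cap\squares{K_\gp}$ is immediate from the second form of the definition, $\dl{Y}=\singK\cap\bigcap_{\gq\notin Y}\squares{K_\gq}$, so no order-parity bookkeeping is really needed; and the degenerate case $Y=\{\gp\}$ never occurs, since nonempty open subsets of the complete curve $X$ are cofinite and $X$ is infinite, so $Z=Y\setminus\{\gp\}$ is automatically a nonempty proper open subset and Lemma~\ref{lem_rk2EZ_vs_rk2EY} applies without reservation.
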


\begin{proof}
By the assumption there exists an element $\lambda\in \units{K}$ such that $\dv_{\CO_Y} \lambda = \gp + 2\gD$ for some $\CO_Y$-divisor $\gD\in \Div\CO_Y$. Fix an element $\mu\in \dl{Y}$. Then, for every $\gq\in X\setminus Y$, the element $\mu$ is a local square at $\gq$, hence the quaternion algebra $\quat{\lambda,\mu}{K_\gq}$ splits. On the other hand, when $\gq\in Y\setminus \{\gp\}$, then both $\mu$ and $\lambda$ are $\gq$-adic units modulo $\squares{K_\gq}$ and so again $\quat{\lambda,\mu}{K_\gq}$ splits. Consequently, all quaternion algebras $\quat{\lambda,\mu}{K_\gq}$ split for $\gq\in X$, except possibly $\gp$. It follows from Hilbert's reciprocity formula, that in such a case also $\quat{\lambda,\mu}{K_\gp}$ splits. But $\mu$ is arbitrary, which implies that $\lambda$ must be a local square at $\gp$.

Conversely, let $Z = Y\setminus \{\gp\}$. Since $\mu\in \squares{K_\gp}$ for every $\mu\in \dl{Y}$ by the assumption, hence $\Dl{Y} = \Dl{Z}$ and it follows from Proposition~\ref{prop_old_result}.\eqref{it_rk2Delta} that
\[
\rk \Cl\CO_Y=\rk \Cl\CO_Z.
\]
Consequently $\gp\in 2\Cl\CO_Y$, by the means of Lemma~\ref{lem_rk2EZ_vs_rk2EY}.
\end{proof}

Finally, we present a proposition connecting $2$-divisibility in the Picard group of a complete curve with $2$-divisibility over its open subset.

\begin{prop}\label{prop_2Pic<->2Cl}
Let $\gp$, $\gq$ be two points of $X$ with $\deg \gp$ even and $\deg \gq$ odd, then
\[
\gp \in 2\PicK\iff \gp \in 2\Cl\CO_{X\setminus\{\gq\}}.
\]
\end{prop}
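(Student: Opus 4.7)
The plan is to reduce both sides of the equivalence to the existence of an element of $\dot K$ with a prescribed parity profile of orders, using Proposition~\ref{prop_even<->ord=1}, and then to use the fact that $\deg\dv_K\lambda = 0$ together with the parity hypotheses on $\deg\gp$ and $\deg\gq$ to bridge the two conditions.

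For the forward implication, I would start by assuming $\gp\in 2\Pic K$. Proposition~\ref{prop_even<->ord=1} then produces some $\lambda\in \sing{\gp}$ with $\ord_\gp\lambda$ odd, which is to say $\ord_\gr\lambda$ is even for every $\gr\neq \gp$ (in particular at $\gq$) and $\ord_\gp\lambda$ is odd. Restricting this principal divisor to $Y := X\setminus\{\gq\}$, one obtains $\dv_{\CO_Y}\lambda = \gp + 2\gD$ for some $\gD\in \Div\CO_Y$, so the class of $\gp$ in $\Cl\CO_Y$ is $2$-divisible.

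For the converse, I would suppose $\gp\in 2\Cl\CO_{X\setminus\{\gq\}}$ and pick $\lambda\in \dot K$ with $\dv_{\CO_Y}\lambda = \gp + 2\gD$. This gives $\ord_\gp\lambda$ odd and $\ord_\gr\lambda$ even for all $\gr\in Y\setminus\{\gp\}$; the only point at which the parity of the order is not yet controlled is $\gq$ itself. Here is where the degree hypothesis enters. Since $\deg\dv_K\lambda = 0$, we have
\[
\ord_\gp\lambda\cdot \deg\gp + \ord_\gq\lambda\cdot \deg\gq + \sum_{\gr\neq \gp,\gq} \ord_\gr\lambda\cdot \deg\gr = 0.
\]
Reducing this identity modulo $2$: the first summand vanishes because $\deg\gp$ is even, and the last sum vanishes because each $\ord_\gr\lambda$ with $\gr\neq \gp,\gq$ is even. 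Hence $\ord_\gq\lambda\cdot \deg\gq\equiv 0\pmod{2}$, and since $\deg\gq$ is odd this forces $\ord_\gq\lambda$ to be even as well. Therefore $\lambda\in \sing{\gp}$ has odd order at $\gp$, and Proposition~\ref{prop_even<->ord=1} yields $\gp\in 2\Pic K$.

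There is no real obstacle here: the whole argument is essentially a parity count on the degree formula, and the hypotheses $2\mid\deg\gp$ and $2\nmid\deg\gq$ are exactly calibrated so that the even/odd bookkeeping forces $\ord_\gq\lambda$ to be even automatically. The only step that requires any care is making sure, in the forward direction, to start from a function furnished by Proposition~\ref{prop_even<->ord=1} rather than directly from a divisor expression, so that the local-square information at all points other than $\gp$ is available for free.
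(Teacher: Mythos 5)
Your proof is correct and follows essentially the same route as the paper: the forward direction is a restriction of a divisor identity from $X$ to $X\setminus\{\gq\}$, and the converse hinges on the same parity computation with $\deg\dv_K\lambda=0$, using $2\mid\deg\gp$ and $2\nmid\deg\gq$ to force $\ord_\gq\lambda$ even. Routing both directions through Proposition~\ref{prop_even<->ord=1} instead of manipulating divisor classes directly is only a cosmetic difference.
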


\begin{proof}
Let $Y := X\setminus \{\gq\}$. If $\gp$ is $2$-divisible in $\PicK$, then $p = \dv_K\lambda + 2\gD$ for some $\lambda \in K$ and $\gD\in \Div K$. Drop any occurrences of $\gq$ in $\gD$ and the principal divisor $\dv_K\lambda$, to get $\CO_Y$-divisors $\gD'$ and $\dv_{\CO_Y}\lambda$. Therefore, over $\CO_Y$, we have
\[
\gp = \dv_{\CO_Y} \lambda + 2\gD'\in \Div \CO_Y
\]
and so $\gp\in 2\Cl\CO_Y$.

Conversely, assume that $\gp\in 2\Cl\CO_Y$, this means that there are: an element $\lambda\in K$ and $\CO_Y$-divisor $\gD\in \Div\CO_Y$ such that
\[
\dv_{\CO_Y}\lambda = p +2\gD\in \Div \CO_Y.
\]
Passing from $Y$ to the complete curve $X$, write
\[
\dv_K\lambda = \gp + 2\gD + \ord_\gq\lambda\cdot \gq.
\]
Compute the degrees of both sides to get
\[
0 = \deg \gp + 2\deg \gD + \ord_\gq \lambda\cdot \deg \gq.
\]
We assumed that $\deg \gq$ is odd, while $\deg \gp$ is even, hence $\ord_\gq\lambda$ must be even, too. Say $\ord_\gq\lambda = 2k$ for some $k\in \ZZ$. Thus, $\dv_K\lambda = \gp + 2(\gD+ k\gq)$, which means that $\gp$ is even, as desired.
\end{proof}

All the above results are of rather general nature and are valid for \emph{any} global function field. It should not come as a big surprise, that, if we concentrate on function fields of a special type, more can be proved. Recall that a smooth curve $X$, whose affine part $\aff{X}$ is defined by the polynomial $y^2- f(x)$, is called \term{hyperelliptic} when $\deg f\geq 4$, \term{elliptic} when $\deg f= 3$ and \term{conic} when $\deg f\leq 2$. In what follows, we will deal with elliptic and hyperelliptic curves in a uniform fashion, therefore, stretching the term a little, we shall call all curves of this form ``hyperelliptic'', treating elliptic curves as special case of hyperelliptic ones. We warn the reader, however, that this is not a standard terminology.

Let $\sfrac{K}{\LL}$ be an extension of function fields and $\pi: X\onto Y$ be the corresponding morphism of their associated (smooth) curves. Recall (cf. \cite[Ch.~VII,~\S7]{Lor96}) that a \term{norm} is a function $\N: \Div K\to \Div\LL$ given by the formula
\begin{equation}\label{eq_Norm}
\N\Bigl( \sum_i a_i \gp_i \Bigr) := \sum_i a_i f\bigl(\sfrac{\gp_i}{\pi(\gp_i)}\bigr) \pi(\gp_i),
\end{equation}
where $f\bigl(\sfrac{\gp}{\pi(\gp)}\bigr)$ is the inertia degree of $\gp$ over $\pi(\gp)$. If $\aff{Y}$ is an affine part of $Y$, $\CO_\LL = \kk[\aff{Y}]$ is the ring of functions regular on $\aff{Y}$ and $\CO_K = \intcl_K \CO_\LL$ is the integral closure of $\CO_\LL$ in $K$, then $\N\!\!\bigm|_{\Div\CO_K}$ restricted to $\Div \CO_K$ is a morphism $\N: \Div \CO_K\to \Div \CO_\LL$. If additionally $\LL = \kk(x)$ is a field of rational functions, then to every point  $\gp$ of $Y= \PP^1\kk$ one may unambiguously assign either a monic polynomial $p\in \kk[x]$ with a single zero at $\gp$ and no other zeros or a function $\sfrac1x$, when $\gp$ is the point at infinity. This constitutes a morphism $\Div\LL\to \units{\LL}$ from the group of divisors to the multiplicative group of the field~$\LL$. Composing it over $\N$, we arrive at the map $\n: \Div K\to \units{\LL}$, which (harmlessly abusing the notation) we shall again call a \term{norm}. In what follows, we shall prefer $\n$ over $\N$ since the former allows us to compare the norm of a divisor with values of the standard norm of the field extension $\n:\units{K}\to \units{\LL}$. 

\begin{thm}\label{thm_2Pic<->NK}
If $K$ is a function field of a smooth hyperelliptic curve $X$ of an odd degree and $\gp \in X$ is a point of an even degree. Then $\gp$ is $2$-divisible in $\PicK$ if and only if $\n\gp$ is representable by the $\n:\units{K}\to \units{\LL}$, where~$\LL$ is a field of rational functions. In other words
\[
\gp \in 2\PicK\iff \exists_{\lambda\in K} \n\gp = \n\lambda.
\]
\end{thm}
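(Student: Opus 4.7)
The plan is to use Proposition~\ref{prop_2Pic<->2Cl} with $\gq := \infty_X$ to reduce the question to one about $2$-divisibility in the class group of the affine coordinate ring of $X$. The hypothesis that $\deg f$ is odd guarantees that $\infty_X$ is a single ramified point of $X$ of degree one (hence odd), so the proposition applies and gives $\gp \in 2\Pic K$ if and only if $\gp \in 2\Cl B$, where $B := \CO_{X \setminus \{\infty_X\}} = A[y]/(y^2 - f)$ and $A := \kk[x]$. Throughout the argument, $\n\gp = p_{\pi\gp}^{f_{\gp}} \in A$ is the monic polynomial of degree $\deg \gp$ generating the norm ideal $\N\gp$.

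For the backward implication ($\Leftarrow$), assume $\n\gp = \n\lambda$ for some $\lambda \in K$. Comparing affine divisors in $A$ yields $\N(\dv_B\lambda) = \dv_A \n\lambda = \dv_A\n\gp = \N\gp$, so $\gp - \dv_B\lambda$ lies in $\ker(\N \colon \Div B \to \Div A)$. This kernel is generated by differences $\gp' - \sigma\gp'$ for primes of $A$ that split in $B$, where $\sigma$ is the hyperelliptic involution. Since $\gp' + \sigma\gp' = \dv_B p_{\pi\gp'}$ is principal in $B$, one has $[\sigma\gp'] = -[\gp']$ in $\Cl B$, and hence $[\gp' - \sigma\gp'] = 2[\gp'] \in 2\Cl B$. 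Consequently $[\gp] = [\dv_B\lambda] + [\gp - \dv_B\lambda] \in 2\Cl B$, and the reduction above gives $\gp \in 2\Pic K$.

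For the forward implication ($\Rightarrow$), write $\gp = 2\gD + \dv_B\mu$ with $\gD \in \Div B$ and $\mu \in K^{\times}$. Applying $\N$ in $\Div A$ yields $\dv_A(\n\gp/\n\mu) = 2\N\gD$, and because $\Cl A$ is trivial one finds $h \in L^{\times}$ with $\dv_A h = \N\gD$, so that $\n\gp = \epsilon \cdot h^2 \n\mu$ for some $\epsilon \in A^{\times} = \dot\kk$. The whole proof then reduces to showing $\epsilon \in \dot\kk^2$, because then $\lambda := \sqrt{\epsilon}\, h\, \mu$ belongs to $K$ and satisfies $\n\lambda = \n\gp$. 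Writing $\mu = a + by$ with $a, b \in L$, one has $\n\mu = a^2 - b^2 f$; since $\deg f$ is odd, the leading-degree terms of $a^2$ and $b^2 f$ have opposite parities, no cancellation occurs at the top, and the parity of $\deg \n\mu$ records whether $a$ (even) or $b$ (odd) dominates at infinity, with the leading coefficient of $\n\mu$ being a square precisely in the $a$-dominant case. The identity $\deg \gp = \deg \n\gp = 2\deg h + \deg \n\mu$ combined with the even-degree hypothesis on $\gp$ forces $\deg \n\mu$ to be even, hence $a$-dominance, hence a square leading coefficient for $h^2 \n\mu$; matching with the monic leading coefficient of $\n\gp$ then pins $\epsilon \in \dot\kk^2$. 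This leading-coefficient computation --- where both hypotheses (odd $\deg f$ and even $\deg \gp$) enter essentially --- is the main obstacle of the proof; without it one would only recover $\n\gp \in \dot\kk \cdot \n(K^{\times})$ rather than $\n\gp \in \n(K^{\times})$.
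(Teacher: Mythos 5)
Your proof is correct and reproduces the paper's overall strategy, but the weight is distributed differently between the two implications. The backward implication is in substance the paper's: your description of $\ker\bigl(\N\colon\Div\CO_K\to\Div\kk[x]\bigr)$ as generated by the differences $\gp'-\conj{\gp'}$ over split primes, each of which equals $2\gp'$ modulo the principal divisor $\dv_{\CO_K}p$, is exactly the content of Lemma~\ref{lem_BS} (the function-field version of \cite[Theorem~III.8.7]{BS66}); you merely fold the inert case, which the paper dispatches separately at the start of Lemma~\ref{lem_NK->2Pic}, into the same fiber-by-fiber computation, and you pass between $\Cl\CO_K$ and $\Pic K$ via Proposition~\ref{prop_2Pic<->2Cl} instead of via the isomorphism $\Cl\CO_K\cong\Pic^0K$. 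The forward implication is where you genuinely diverge. The paper (Lemma~\ref{lem_2Pic->NK}) applies the divisor norm on the \emph{complete} curve to $\gp=2\gD+\dv_K\lambda$ and reads off $\n\gp=(\n\gD)^2\cdot\n\lambda=(ac)^2-(bc)^2f$ in three lines, using neither hypothesis; this is what allows Remark~\ref{rem_odd_degree} to record that this implication survives without the odd-degree assumption. You instead work affinely, which forces you to confront the unit $\epsilon\in\dot{\kk}$ relating $\n\gp$ to $h^2\n\mu$, and you kill it by the degree-parity and leading-coefficient argument, consuming both the oddness of $\deg f$ and the evenness of $\deg\gp$. The trade-off is clear: the paper's version of this implication is shorter and strictly more general, while yours is more scrupulous about the normalization of the map $\Div\to\dot{L}$ (a point the paper's one-line norm computation treats rather casually), at the price of not yielding the extra generality of Remark~\ref{rem_odd_degree}. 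One cosmetic slip: the leading coefficient of $\n\mu$ in the $b$-dominant case is $-\lin$early $\mathrm{lc}(b)^2\mathrm{lc}(f)$, which can accidentally be a square, so your ``precisely'' is an overstatement --- but you only ever use the $a$-dominant direction, so nothing is affected.
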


The proof of this theorem will be divided into three lemmas. In Lemmas~\ref{lem_BS}--\ref{lem_2Pic->NK}, $K = \qf\bigl( \sfrac{\kk[x,y]}{y^2-f(x)} \bigr)$ is always a function field of a hyperelliptic curve $X$ with its affine part defined by the polynomial $y^2- f(x)$, further $\LL = \kk(x)$ is a field of rational functions in $x$ and $\CO_K = \intcl \kk[x]$. We denote by $\conj{\vphantom{m}\ }:K\to K$ the unique non-trivial $\LL$-automorphism of $K$. The ring $\CO_K$ is a Dedekind domain, hence its Picard group can be identified with its ideal class group $\varCl\CO_K$.

The first lemma is basically a recap of \cite[Theorem~III.8.7]{BS66}. Unfortunately in \cite{BS66} it is proved only for number fields, hence for the sake of completeness we feel obliged to explicitly state and prove its function field counterpart.

\begin{lem}\label{lem_BS}
If the $\n\gD$ of a divisor $\gD\in \Div \CO_K$ equals $1$, then the class of $\gD$ lies in $2\Cl\CO_K$.
\end{lem}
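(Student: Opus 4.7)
The plan is to decode the hypothesis $\n\gD=1$ in purely divisor-theoretic terms and then exploit the fact that $\CO_L=\kk[x]$ is a principal ideal domain. Since the natural map sending each finite prime of $\CO_L$ to its monic irreducible generator gives an injective homomorphism $\Div\CO_L\to\dot L$, the condition $\n\gD=1$ is equivalent to $\N\gD=0\in\Div\CO_L$.

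Writing $\gD=\sum_{\gp} a_\gp\gp$, I would group the primes in its support by their image $\pi(\gp)$ in $\CO_L$ and analyze the three possible behaviors in the quadratic extension $K/L$ separately. If $\gq=\pi(\gp)$ is ramified, then the unique $\gp$ above $\gq$ has inertia degree $1$, hence the contribution $a_\gp\gq$ to $\N\gD$ forces $a_\gp=0$. If $\gq$ is inert, the unique $\gp$ above has inertia degree $2$, and the contribution $2a_\gp\gq$ again forces $a_\gp=0$. Finally, if $\gq$ splits as $\gp\conj{\gp}$ with $\gp\neq\conj{\gp}$, both primes have inertia degree $1$ and we obtain $a_\gp+a_{\conj{\gp}}=0$. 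Thus $\gD$ is supported exclusively over split primes of $\CO_L$, and on every conjugate pair it satisfies $a_{\conj{\gp}}=-a_\gp$.

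The second ingredient is the remark that, for every split prime $\gq$ of $\CO_L$ generated by a monic irreducible $p(x)\in\kk[x]$, the factorization $p(x)\CO_K=\gp\cdot\conj{\gp}$ translates to the equality
\begin{equation*}
\gp+\conj{\gp}=\dv_{\CO_K}p(x)
\end{equation*}
of divisors, i.e. $\gp+\conj{\gp}$ is principal. Consequently $\gp-\conj{\gp}=2\gp-\dv_{\CO_K}p(x)$ has class $2[\gp]$ in $\Cl\CO_K$. Choosing one representative $\gp$ from each split pair and combining with the previous paragraph,
\begin{equation*}
[\gD]=\sum_{\gp}a_\gp\bigl[\gp-\conj{\gp}\bigr]=2\Bigl[\sum_{\gp}a_\gp\gp\Bigr]\in 2\Cl\CO_K,
\end{equation*}
which is exactly the desired conclusion.

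The argument is essentially bookkeeping; the only step requiring care is the translation of $\n\gD=1$ into constraints on the coefficients, and I do not anticipate any real obstacle. In particular, the hyperelliptic hypothesis is not genuinely used here: the proof works for any separable quadratic extension $K/L$ in which the base ring $\CO_L$ is a PID.
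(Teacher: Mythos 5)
Your proof is correct and follows essentially the same route as the paper's: translate $\n\gD=1$ into vanishing conditions on the coefficients via unique factorization in $\kk[x]$, conclude that $\gD$ is supported on conjugate pairs with opposite coefficients, and then use the principality of $\gp+\conj{\gp}=\dv_{\CO_K}p$ to rewrite $\gp-\conj{\gp}$ as $2\gp$ modulo principal divisors. The only cosmetic difference is that you separate the ramified and inert cases where the paper treats all conjugation-fixed points uniformly.
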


\begin{proof}
We closely follow the lines of the proof of~\cite[Theorem~III.8.7]{BS66}. Write the divisor $\gD$ in the form
\[
\gD = \sum_{i=1}^m \bigl( a_i\gp_i + b_i\ogp_i\bigr) + \sum_{j=1}^n c_j\gq_j,
\]
where the points $\gq_j = \conj\gq_j$ are fixed under the action of $\conj{\vphantom{h}\ }$ and $\gp_i\neq \ogp_i$ are not. Then, $\n\gp_i = \n\ogp_i = p_i$ and $\n\gq_j = q_j^{f_j}$ for some monic polynomials $p_i, q_j\in \kk[x]$, $f_j\in \{1,2\}$, $i\leq m$, $j\leq n$. Therefore
\[
1 = \n\gD = \prod_{i=1}^m p_i^{a_i+b_i} \cdot \prod_{j=1}^n q_j^{c_j}.
\]
Now, all the polynomials are irreducible and pairwise distinct and $\kk[x]$ is a UFD, hence all the exponents must vanish. In particular $c_j = 0$ for every $j$ and $a_i = -b_i$ for every $i$. Consequently 
\[
\gD = \sum_{i=1}^m a_i\bigl( \gp_i-\ogp_i\bigr),
\]
but $\gp_i +\ogp_i= \dv_{\CO_K} p$, hence $\gp_i = - \ogp_i$ in the class group $\Cl\CO_K$. All in all, we write the class of $\gD$ as
\[
\sum_{i=1}^m 2 a_i\gp_i \in 2\Cl\CO_K.\qedhere
\]
\end{proof}

We are now in a position to prove the first implication of Theorem~\ref{thm_2Pic<->NK}.

\begin{lem}\label{lem_NK->2Pic}
If $\deg\gp\in 2\ZZ$ and $\n\gp\in \n\units{K}$, then $\gp$ is even.
\end{lem}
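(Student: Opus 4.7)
The plan is to descend to the affine curve, use Lemma~\ref{lem_BS} to deduce $2$-divisibility of $\gp$ in $\Cl\CO_K$, and then lift the conclusion to $\Pic K$ via Proposition~\ref{prop_2Pic<->2Cl}. Since $\deg f$ is odd, the place at infinity of $L = \kk(x)$ is totally ramified in $K$, so a single point $\gq \in X$ lies above $\infty$ and $\deg \gq = 1$ is odd. Because $\deg \gp$ is even, $\gp \neq \gq$, so $\gp$ corresponds to a non-zero prime of $\CO_K$; once we show $\gp \in 2\Cl \CO_K$, Proposition~\ref{prop_2Pic<->2Cl} applied with the even-degree $\gp$ and the odd-degree $\gq$ at once upgrades this to $\gp \in 2\Pic K$, which is the desired conclusion.

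To produce that $2$-divisibility in $\Cl \CO_K$, pick $\lambda \in \dot K$ witnessing the hypothesis, i.e.\ satisfying $\n \lambda = \n \gp$ in $\dot L$, and form the divisor
\[
\gD := \gp - \dv_{\CO_K} \lambda \in \Div \CO_K.
\]
Clearly $[\gD] = [\gp]$ in $\Cl \CO_K$, so it suffices to check $[\gD] \in 2 \Cl \CO_K$, which Lemma~\ref{lem_BS} delivers as soon as we verify $\n \gD = 1$ in $\dot L$. Using the functoriality of the norm (so that $\N(\dv_{\CO_K} \lambda) = \dv_{\CO_L}(\N \lambda)$) together with the fact that the affine point $\gp$ forces $\n \gp$ to be a genuine monic polynomial $p^f \in \kk[x]$, the hypothesis $\n \lambda = \n \gp = p^f$ should make the two contributions to $\n \gD$ cancel exactly in $\dot L$, yielding $\n \gD = 1$.

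Granting this, Lemma~\ref{lem_BS} gives $[\gD] \in 2 \Cl \CO_K$, hence $\gp \in 2 \Cl \CO_K$, and Proposition~\ref{prop_2Pic<->2Cl} finishes the argument. The main obstacle I anticipate is precisely the identity $\n \gD = 1$: although morally routine, it requires cleanly disentangling three parallel incarnations of ``norm''---the field norm $\lambda \conj\lambda$, the divisor-level norm $\N \colon \Div K \to \Div L$, and the composite $\n \colon \Div K \to \dot L$---and checking that discarding $\gq$ on the $K$-side is compatible with discarding $\infty$ on the $L$-side. The ramification of $\gq$ over $\infty$, forced by $\deg f$ being odd, is exactly what makes these two truncations match and prevents a stray $\kk^\times$-factor from spoiling the equality in $\dot L$.
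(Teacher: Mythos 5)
Your proof is correct and follows essentially the same route as the paper: the key divisor $\gD = \gp - \dv_{\CO_K}\lambda$, the verification that $\n\gD = 1$, and the appeal to Lemma~\ref{lem_BS} are exactly the paper's argument. The only (harmless) deviations are that you treat the inertia-degree-$2$ case uniformly instead of splitting it off --- which works, since then $\n\gp = p^2$ and the same cancellation goes through --- and that you lift from $\Cl\CO_K$ to $\Pic K$ via Proposition~\ref{prop_2Pic<->2Cl} applied to the odd-degree point at infinity, where the paper instead invokes the isomorphism $\Cl\CO_K\cong\Pic^0 K$; both lifts rest on the same fact that the point at infinity is unique, ramified, and of degree one because $\deg f$ is odd.
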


\begin{proof}
By the assumption of the theorem, the degree of $X$ is odd and it follows from \cite[Lemma~V.10.15]{Lor96} that $X$ has a unique point at infinity (denote it $\infty_K$) and this point is ramified. In particular, $\deg\infty_K= 1\notin 2\ZZ$ and so $\gp$ and $\infty_K$ are distinct. If the inertia degree of $\gp$ (in the field extension $\sfrac{K}{\LL}$) equals $2$, then $\n\gp = p^2$, for some monic polynomial $p\in \kk[x]$. This means that $\dv_Kp = \gp - 2\infty_K$. Therefore $\gp = \dv_Kp + 2\infty_K\in 2\PicK$. 

From now on, we assume that $\gp\neq \infty_K$ and the inertia degree of $\gp$ equals $1$. Hence, $\n\gp = p$  and by the assumption there exists such an element $\lambda\in K$, that $p = \n\lambda = \lambda\conj\lambda$. Take a divisor $\gD := \gp-\dv_{\CO_K}\lambda\in \Div\CO_K$. Clearly
\[
\n\gD = \frac{\n\gp}{\n\lambda} = 1
\]
and so the previous lemma asserts that $\gD\in 2\Cl\CO_K$. Since $\infty_K$ is the unique point at infinity and $\deg\infty_K= 1$, therefore \cite[Proposition~VIII.9.2]{Lor96} implies that the group $\Cl\CO_K$ is isomorphic to $\Pic^0K$. Hence, passing with $\gD$ to $\PicK$, we have $\gp - \dv_K\lambda+ 2k\infty_K \in 2\PicK$ for some $k\in \ZZ$. In particular $\gp \in 2\PicK$, as desired.
\end{proof}

We are now ready to prove the opposite implication of Theorem~\ref{thm_2Pic<->NK}.

\begin{lem}\label{lem_2Pic->NK}
The norm $\n\gp$ of every even point lies in $\n\units{K}$.
\end{lem}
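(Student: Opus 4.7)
The plan is to split by the inertia degree of $\gp$ in $K/L$ with $L=\kk(x)$. Note first that $\gp\neq\infty_K$: $\deg\gp$ is even by Observation~\ref{obs_even_pt->even_deg}, while $\deg\infty_K=1$. If $\gp$ is inert over $L$ (inertia degree $2$), then the monic polynomial $p\in\kk[x]$ defining $\pi(\gp)$ gives $\n\gp = p^2 = \n(p)$, already a field norm since $p\in L\subset K$; nothing more is needed in this subcase.

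Suppose now that $\gp$ has inertia degree one (split or ramified), so $\n\gp = p$. I would invoke $\gp\in 2\Pic K$ to pick $\lambda\in\dot K$ and $\gD\in\Div K$ with $\dv_K\lambda = \gp - 2\gD$. Applying the divisor norm yields $\dv_L\n\lambda = \pi(\gp) - 2\n\gD$, and since $\dv_L p = \pi(\gp) - \deg p\cdot \infty_L$, subtracting gives
\[
\dv_L\!\bigl(p/\n\lambda\bigr) \;=\; 2\n\gD - \deg p\cdot \infty_L,
\]
whose every coefficient is even (using that $\deg p = \deg\gp$ is even). Since $\Pic^0 L = 0$ for the rational function field $L$, this forces $p/\n\lambda = c\,h^2$ for some $c\in\dot\kk$ and $h\in\dot L$. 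Once the squareness of $c$ is established, say $c = d^2$, the identity $p = (dh)^2\n\lambda = \n(dh\lambda)$ exhibits $\n\gp$ as a field norm and concludes the proof.

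The main obstacle is to show $c\in(\kk^\times)^2$; this is where the oddness of $\deg f$ plays the decisive role. Decompose $\lambda = A + By$ with $A, B\in L$. Because $\infty_K$ is the unique, ramified prime of $K$ above $\infty_L$ and $y^2 = f$, one has $\ord_{\infty_K}(y) = -\deg f$, which is odd. Consequently $\ord_{\infty_K}(A) = 2\ord_{\infty_L}(A)$ is always even, while $\ord_{\infty_K}(By) = 2\ord_{\infty_L}(B) - \deg f$ is always odd. But $\ord_{\infty_K}\lambda = -2\gD(\infty_K)$ is even (using $\gp\neq\infty_K$), so the identity $\ord_{\infty_K}\lambda = \min\{\ord_{\infty_K}A,\ord_{\infty_K}By\}$ forces this minimum to be attained by the even quantity $\ord_{\infty_K}A$; that is, $A$ strictly dominates $By$ at $\infty_K$. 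It follows that the leading coefficient of $\n\lambda = A^2 - B^2 f$ at $\infty_L$ is just the squared leading coefficient of $A$, hence a square in $\kk^\times$. Matching leading coefficients at $\infty_L$ on the two sides of $p = c\,h^2\,\n\lambda$ (with $p$ monic and $h^2$ having square leading coefficient) then forces $c$ itself to be a square in $\kk^\times$, closing the argument.
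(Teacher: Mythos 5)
Your proof is correct, but it takes a noticeably longer and strictly less general route than the paper's. You descend to $L$ at the level of divisors, use $\Pic^0 L=0$ to write $p=c\,h^2\,\n\lambda$ with $c\in\dot\kk$, and then have to work to kill the leftover constant $c$ --- and that is exactly where you are forced to invoke the unique ramified point $\infty_K$ and the parity of $\ord_{\infty_K}(By)$, i.e.\ the hypothesis $\deg f\notin 2\ZZ$. The paper short-circuits all of this: applying the multiplicative norm $\n\colon\Div K\to\dot L$ directly to $\gp=2\gD+\dv_K\lambda$ gives $\n\gp=(\n\gD)^2\cdot\n\lambda$, and a square from $\dot L$ times a value of the norm form $\form{1,-f}$ is again such a value, since $c^2(a^2-b^2f)=(ac)^2-(bc)^2f$; no constant ever needs to be analyzed and no case split on the inertia degree is needed. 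The observation you missed is simply that $(\n\gD)^2\,\n\lambda=\n(\n\gD\cdot\lambda)$, i.e.\ squares of $L$ can be absorbed into the norm. The practical consequence is recorded in Remark~\ref{rem_odd_degree}: this implication of Theorem~\ref{thm_2Pic<->NK} holds without the odd-degree assumption, whereas your argument genuinely uses it. On the plus side, your leading-coefficient analysis at $\infty_L$ is carried out carefully and is sound, and your explicit handling of the inert case ($\n\gp=p^2=\N(p)$) is fine; it is just unnecessary once the norm identity is set up multiplicatively.
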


\begin{proof}
Take a point $\gp\in X$ and assume that it is $2$-divisible in $\PicK$. Thus, there is a divisor $\gD\in \Div K$ and an element $\lambda\in K$ such that
\[
\gp = 2\gD + \dv_K\lambda.
\]
Compute the norms of both sides to get
\[
\n\gp = \n(2\gD + \dv_K\lambda) = (\n\gD)^2 \cdot \n\lambda.
\]
If $\lambda = a+by$ for some $a, b\in \LL$, then $\n\lambda = a^2 - b^2f$, therefore
\[
\n\gp = (ac)^2 - (bc)^2 f,
\]
where $c= \n\gD\in \LL$. In particular $\n\gp\in \n\units{K}$.
\end{proof}

The proof of Theorem~\ref{thm_2Pic<->NK} is now complete.

\begin{rem}\label{rem_odd_degree}
One should note that the condition $\deg f\notin 2\ZZ$ occurs only in the proof of Lemma~\ref{lem_NK->2Pic}. Therefore, the implication 
\[
\gp\in 2\PicK\Longrightarrow \n\gp\in \n\units{K}
\]
of the theorem holds even without this assumption. Nevertheless, for the other implication this condition is indispensable. Indeed, take 
\[
K = \qf\bigl( \sfrac{\FF_5[x,y]}{y^2-x^4+x+1} \bigr). 
\]
Using computer algebra system Magma one checks that there are a total of $8$ points of $K$ of degree $2$ that are not $2$-divisible in $\PicK$, but their norms lie in $\n\units{K}$.
\end{rem}

\begin{rem}
The assumption, that $\deg\gp$ is even, is also essential. Take a field 
\[
K = \qf\bigl( \sfrac{\FF_{13}[x,y]}{y^2 + 12x^3 + x^2 + 3x + 10} \bigr).
\]
As it was mentioned in the proof of Lemma~\ref{lem_NK->2Pic}, the field $K$ has the unique point at infinity $\infty_K$ and $\deg\infty_K = 1$. On the other hand, $\n\infty_K = \sfrac{1}{x}\in \n\units{K}$. Again this example was checked using Magma.
\end{rem}

The criterion in the above theorem, let us show that even points do exist.

\begin{prop}\label{prop_even_exist}
Let $K$ be a function field of a \textup(smooth\textup) hyperelliptic curve given by a polynomial $y^2-f(x)$. If $f\in \kk[x]$ is monic of an odd degree, then there are infinitely many points of $K$ that are $2$-divisible in $\PicK$.
\end{prop}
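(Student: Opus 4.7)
The plan is to exhibit infinitely many primes of $K$ that are \emph{inert} in the quadratic extension $\sfrac{K}{L}$, where $L=\kk(x)$, and to observe that every such inert prime is automatically $2$-divisible in $\Pic K$. The divisor structure of the extension then does all the work, without any need to build candidate square-class representatives by hand.

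Recall from the proof of Lemma~\ref{lem_NK->2Pic} that, since $\deg f$ is odd, the curve $X$ carries a unique point $\infty_K$ at infinity, totally ramified over $\infty_L\in \PP^1\kk$, with $\deg\infty_K = 1$. Now let $\gp\in X$ be any inert prime lying over a finite place of $L$ corresponding to a monic irreducible polynomial $p\in \kk[x]$ of degree $d$; then $e(\gp/\pi(\gp)) = 1$ and $f(\gp/\pi(\gp)) = 2$, so $\deg\gp = 2d$ is even and $\n\gp = p^2 = \n(p) \in \n\dot K$. Theorem~\ref{thm_2Pic<->NK} yields $\gp\in 2\Pic K$ at once. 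Alternatively, a one-line valuation computation gives $\dv_K p = \gp - 2d\cdot\infty_K$, so $\gp = \dv_K p + 2(d\cdot\infty_K)\in 2\Pic K$, bypassing the theorem altogether.

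To finish, I would invoke the Chebotarev density theorem for function fields, applied to the extension $\sfrac{K}{L}$. Since $\deg f$ is odd, $f$ is not a square in $L$, so $K = L(\sqrt{f})$ is a genuine separable quadratic extension, and exactly half of the primes of $L$ are inert in $\sfrac{K}{L}$; in particular, infinitely many inert primes exist, each of them yielding an even point of $X$ lying in $2\Pic K$. The only (mild) obstacle is verifying the formula $\dv_K p = \gp - 2d\cdot\infty_K$, which follows from the local identities above together with the fact that, viewed in $L\subset K$, the polynomial $p$ has zeros and poles in $\PP^1\kk$ only at its associated place and at $\infty_L$, and hence in $X$ only at their preimages.
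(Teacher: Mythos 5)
Your argument is correct, and it takes a genuinely different (and shorter) route than the paper. You restrict attention to inert primes: for such a $\gp$ lying over a monic irreducible $p$ of degree $d$, the identity $\dv_K p = \gp - 2d\cdot\infty_K$ (valid because $\infty_K$ is the unique, totally ramified point at infinity when $\deg f$ is odd) gives $\gp = \dv_K p + 2(d\cdot\infty_K)\in 2\Pic K$ with no further machinery, and Chebotarev for the geometric quadratic extension $\sfrac{K}{\kk(x)}$ supplies infinitely many such $\gp$. The paper's proof also dispatches the inert case in exactly this one-line fashion, but treats it as a degenerate subcase of a construction whose real content is the \emph{split} case: there one builds $p$ as the minimal polynomial of a primitive element of a compositum forcing $(\sfrac{f_i}{p})=1$ for all $i$, and then proves $\n\gp\in\n\dot{K}$ by showing the ternary form $\form{1,-f,-p}$ is locally isotropic everywhere (Springer's theorem at infinity, Dedekind's reciprocity at the $f_i$), invoking Theorem~\ref{thm_2Pic<->NK}. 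What your approach buys is economy: no appeal to Theorem~\ref{thm_2Pic<->NK}, no reciprocity computation, and the only nontrivial input is the density of inert primes (which could even be obtained elementarily, e.g.\ by a reciprocity argument in $\kk[x]$ showing $f$ is a non-residue modulo infinitely many irreducibles, or by counting places of bounded degree via the zeta function). What the paper's longer route buys is the production of $2$-divisible points of inertia degree one, i.e.\ points whose evenness is not already visible from the splitting behaviour of $\sfrac{K}{\kk(x)}$; for the statement as actually claimed, however, your inert points suffice, and your verification of $\dv_K p=\gp-2d\cdot\infty_K$ by comparing conorms of the zero and pole divisors of $p$ is sound.
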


\begin{proof}
As observed in the proof of Lemma~\ref{lem_NK->2Pic}, $K$ has the unique point at infinity (denote it $\infty_K$). This point is ramified and the Picard group $\Cl\CO_K$ of $\CO_K = \intcl_K \kk[x]$ is isomorphic to $\Pic^0 X$. Let $f = f_1\dotsm f_n$ be the decomposition of $f$ into irreducible monic factors. Fix a non-zero integer $M\in \NN$ and take an irreducible polynomial $q\in \kk[x]$ of an even degree strictly greater than $M$ and prime to $\chr \kk$. Take an extension $\kk(\alpha_0)$ of~$\kk$, where $\alpha_0$ is a root of $q$. Clearly, $\kk(\alpha_0)\neq \kk$ since the degree of $q_0$ is even and greater than $M\neq 0$. Denote 
\[
\lambda_{1} := f_1(\alpha_0), \dotsc, \lambda_{n} := f_n(\alpha_0)
\]
and take a field $\kk(\beta) := \kk\bigl( \alpha_0, \sqrt{\lambda_{1}}, \dotsc, \sqrt{\lambda_{n}}\bigr)$. Further, let $p\in \kk[x]$ be the minimal polynomial of~$\beta$. Take $\gp\in X$ to be a point of $K$ dominating $p$. Clearly the degree of $\gp$ is even and we have\begin{equation}\label{eq_fi_squares_mod_p}
\Bigl( \frac{f_1}{p} \Bigr) = \dotsb = \Bigl( \frac{f_n}{p} \Bigr) = 1.
\end{equation}
If the inertia degree of $\gp$ equals $2$, then $\gp = \dv_{\CO_K} p$ in $\Div \CO_K$, hence $\gp = 0$ in $\Cl\CO_K\cong \Pic^0 X$. It follows that the class of $\gp$ in the Picard group $\PicK\cong \Pic^0 X\oplus \ZZ$ can be written as $(0, \deg \gp)$ and so clearly belong to $2\PicK$. Thus, assume that the inertia degree $f(\sfrac{\gp}{p})$ of $\gp$ is one.

We claim that $\n\gp\in\n K$ or in other words, that $p= \n\gp$ is represented over~$\LL= \kk(x)$ by the quadratic form $\form{1,-f}$. This is equivalent to saying that the form $\varphi:= \form{1,-f,-p}$ is isotropic over $\kk(x)$. By the local-global principle, it suffices to show that the form is locally isotropic in every completion of $\kk(x)$.

First, take the completion at infinity $\LL_\infty$. By the assumption, $-\ord_\infty f= \deg f\notin 2\ZZ$, while $-\ord_\infty p= \deg p\in 2\ZZ$. Decompose the form $\varphi\otimes \LL_\infty$ into the sum $\form{1,-p}\otimes \LL_\infty$ with coefficients of even order and $\form{-f}\otimes \LL_\infty$ of odd order. A well known consequence of Springer's theorem (see e.g. \cite[Proposition~VI.1.9]{Lam05}) asserts that $\varphi\otimes \LL_\infty$ is isotropic if and only if the residue form of $\form{1,-p}$ is isotropic. But the latter is just  $\form{1,-1}$, hence trivially isotropic, since $p$ is monic.

Take now a completion $\LL_s$ of $\LL$ at the place associated to some irreducible polynomial $s$ different from $p$ and not dividing $f$. Using \cite[Proposition~VI.1.9]{Lam05}, we see that $\varphi\otimes \LL_s$ is again isotropic, because its residue form has dimension three (over a finite field) and therefore is isotropic.

Next, consider the completion $\LL_p$ of $\LL$ at the place associated to $p$. We know, that all $f_i$'s are squares modulo $p$ and so is $f$ itself. Consequently $\form{1,-f}\otimes \LL_p$ is isotropic, hence $\varphi\otimes \LL_p$ is isotropic, too. Finally, take the $f_i$-adic completion $\LL_{f_i}$ for some monic irreducible factor $f_i$ of $f$. We have $(\frac{f_i}{p})= 1$ by~\eqref{eq_fi_squares_mod_p} and Dedekind's quadratic reciprocity law says that
\[
\Bigl(\frac{p}{f_i}\Bigr)\cdot \Bigl(\frac{f_i}{p}\Bigr) = (-1)^{\textstyle\sfrac{(\card{\kk}-1)(\deg f_i\cdot\deg p)}{2}},
\]
but $\deg p$ is even and it follows that $(\frac{p}{f_i})= 1$, too. Thus, $\varphi\otimes \LL_{f_i}$ is again isotropic. All in all, $\varphi$ is isotropic over $\LL$, which proves our claim. Theorem~\ref{thm_2Pic<->NK} asserts now that $\gp$ is even. It is immediate that taking subsequently $M := \deg p$ and repeating the above construction, we ultimately produce an infinite sequence of $2$-divisible points in $K$.
\end{proof}

%
%

\section{Main results}
In this section, we prove our two main results, namely: Theorem~\ref{thm_even=wild} showing that a point is even if and only it is a unique wild point for some self-equivalence and its partial generalization Theorem~\ref{thm_main2}. First, however, we need the following lemma, generalizing Proposition~\ref{prop_Delta<Kp^2}.

\begin{lem}\label{lem_Delta_basis}
Let $\emptyset\neq Y\subsetneq X$ be a proper open subset and $\gp_1, \dotsc, \gp_n\in Y$. Then $\gp_1, \dotsc, \gp_n$ are linearly independent \textup(over $\FF_2$\textup) in $\sfrac{\Cl\CO_Y}{2\Cl\CO_Y}$ if and only if there are $\lambda_1, \dotsc, \lambda_n\in \dl{Y}$ linearly independent in $\Dl{Y}$ and such that for every $1\leq i\leq n$
\[
\lambda_i\notin K_{\gp_i}^2\quad\text{and}\quad \lambda_i\in \bigcap_{j\neq i} K_{\gp_j}^2.
\]
\end{lem}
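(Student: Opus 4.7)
The proof plan has two halves meeting through the natural $\FF_2$-linear map
\[
\phi\colon \Dl{Y}\longrightarrow \bigoplus_{i=1}^{n}\sqgd{K_{\gp_i}},\qquad \lambda\longmapsto (\lambda\cdot\squares{K_{\gp_i}})_{i=1}^{n}.
\]
Since every $\lambda\in\dl{Y}$ has $\ord_{\gp_i}\lambda$ even for each $\gp_i\in Y$, the image of $\phi$ actually lies in $\bigoplus_{i=1}^n\langle u_{\gp_i}\rangle\cong \FF_2^n$. Set $Z := Y\setminus\{\gp_1,\dotsc,\gp_n\}$; directly from the definition $\ker\phi = \Dl{Z}$. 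So everything reduces to deciding when $\phi$ is surjective and using that to read off the desired $\lambda_i$'s.

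For the direction $(\Rightarrow)$, I would invoke Lemma~\ref{lem_rk2EZ_vs_rk2EY}: linear independence of $\gp_1,\dotsc,\gp_n$ in $\sfrac{\Cl\CO_Y}{2\Cl\CO_Y}$ is exactly the statement $\rk\Cl\CO_Z = \rk\Cl\CO_Y - n$. Combined with Proposition~\ref{prop_old_result}.\eqref{it_rk2Delta}, this gives $\rk\Dl{Y}-\rk\Dl{Z}=n$, so the induced injection $\Dl{Y}/\Dl{Z}\hookrightarrow \FF_2^n$ is an isomorphism, i.e. $\phi$ is surjective. Pulling back the standard basis of $\FF_2^n$ yields $\lambda_1,\dotsc,\lambda_n\in\Dl{Y}$ with $\lambda_i\notin \squares{K_{\gp_i}}$ and $\lambda_i\in \squares{K_{\gp_j}}$ for $j\neq i$; linear independence of the $\lambda_i$ in $\Dl{Y}$ follows from linear independence of their images under $\phi$.

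For the direction $(\Leftarrow)$, the strategy is Hilbert reciprocity. Assume $\sum a_i\gp_i\in 2\Cl\CO_Y$ with $a_i\in\{0,1\}$; choose $\lambda\in\dot K$ and $\gD\in\Div\CO_Y$ with $\dv_{\CO_Y}\lambda = \sum a_i\gp_i + 2\gD$. Fix $j$ and evaluate $(\lambda,\lambda_j)_\gq$ at every $\gq\in X$:
at $\gq\in X\setminus Y$ and at $\gq=\gp_k$ with $k\neq j$ the symbol is $1$ because $\lambda_j$ is a local square there; at $\gq\in Z$ both $\lambda$ and $\lambda_j$ have even order and (the residue characteristic being odd) the Hilbert symbol of two units is trivial, so everything collapses to $1$. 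Hilbert reciprocity therefore forces $(\lambda,\lambda_j)_{\gp_j}=1$. On the other hand, writing $\lambda=\pi_{\gp_j}^{a_j}u$ and $\lambda_j=\pi_{\gp_j}^{2k}v$ with $v$ a non-square unit (since $\lambda_j\notin\squares{K_{\gp_j}}$), bilinearity collapses the symbol to $(\pi_{\gp_j},v)_{\gp_j}^{a_j}=(-1)^{a_j}$, so $a_j=0$. Running this over all $j$ gives linear independence.

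The only real obstacle is the bookkeeping for $(\lambda,\lambda_j)_\gq$ at the auxiliary points: one needs both that both entries land in the even-order part of $\sqgd{K_\gq}$ and the well-known fact that Hilbert symbols of units vanish at non-dyadic completions. Once these are confirmed, reciprocity does the rest, and the rank identity from Lemma~\ref{lem_rk2EZ_vs_rk2EY} and Proposition~\ref{prop_old_result} handles the other direction cleanly.
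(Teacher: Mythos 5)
Your proposal is correct, and it reorganizes the argument in a way that differs genuinely from the paper. The paper proves the implication ``independence $\Rightarrow$ existence of the $\lambda_i$'' by induction on $n$: the base case is Proposition~\ref{prop_Delta<Kp^2}, and the inductive step removes $\gp_1$ from $Y$, applies the hypothesis over $Z=Y\setminus\{\gp_1\}$, and corrects $\mu$ by multiplying with suitable $\lambda_i^{\varepsilon_i}$ to manufacture $\lambda_1$; the converse is simply cited from Czoga{\l}a's Lemma~2.1. You instead make the localization map $\phi\colon\Dl{Y}\to\bigoplus_i\langle u_{\gp_i}\rangle\cong\FF_2^n$ the centerpiece, identify $\ker\phi=\Dl{Z}$ for $Z=Y\setminus\{\gp_1,\dotsc,\gp_n\}$, and read both sides of the equivalence off the dimension count $\rk\Dl{Y}-\rk\Dl{Z}=\rk\Cl\CO_Y-\rk\Cl\CO_Z$, which by Lemma~\ref{lem_rk2EZ_vs_rk2EY} equals $n$ exactly when the $\gp_i$ are independent. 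This is a clean, non-inductive route, and all the steps check out (in particular $\Dl{Z}\subseteq\Dl{Y}$ and the evenness of $\ord_{\gp_i}\lambda$ for $\lambda\in\dl{Y}$, which puts the image inside $\FF_2^n$). Your converse via Hilbert reciprocity is also correct --- it is in effect the $n$-point generalization of the reciprocity argument already used inside Proposition~\ref{prop_Delta<Kp^2} --- and it has the merit of making explicit what the paper delegates to an external reference. One small remark: your rank computation already gives the converse for free, since the existence of the $\lambda_i$ forces $\phi$ to be surjective, hence $\rk\Cl\CO_Y-\rk\Cl\CO_Z=n$, hence independence by Lemma~\ref{lem_rk2EZ_vs_rk2EY}; so the reciprocity half, while valid, is not strictly necessary in your setup.
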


\begin{proof}
We proceed by an induction on the number of points. For $n=1$ the assertion follows from Proposition~\ref{prop_Delta<Kp^2}. Suppose that $n>1$ and the assertion holds true for $n-1$. Classes of $\gp_1, \dotsc, \gp_n$ are linearly independent in $\sfrac{\Cl\CO_Y}{2\Cl\CO_Y}$ and so, in particular, $\gp_1$ is not be $2$-divisible in $\Cl\CO_Y$. Proposition~\ref{prop_Delta<Kp^2} asserts that there exists $\mu\in \dl{Y}$ such that $\mu\notin K_{\gp_1}^2$. Take a subset $Z:= Y\setminus \{\gp_1\}$ of $Y$. By the means of Lemma~\ref{lem_rk2EZ_vs_rk2EY}, we have $\rk \Cl\CO_Z = \rk \Cl\CO_Y-1$. Clearly, $\Dl{Z}\subset\Dl{Y}$ with $\mu\in \Dl{Y}\setminus \Dl{Z}$. Moreover, $\gp_2, \dotsc, \gp_n$ remain linearly independent in $\sfrac{\Cl\CO_Z}{2\Cl\CO_Z}$.

It follows from the inductive hypothesis, that there are elements $\lambda_2, \dotsc, \lambda_n\in \dl{Z}$ linearly independent in $\Dl{Z}$ and such that for every $2\leq i\leq n$
\[
\lambda_i\notin K_{\gp_i}^2\quad\text{and}\quad \lambda_i\in \bigcap_{\substack{j\neq i\\j\geq 2}} K_{\gp_j}^2.
\]
By the very definition of $\dl{Z}$, all $\lambda_i$'s for $i\geq 2$ lie in $K_{\gp_1}^2$. Let
\[
\lambda_1 := \mu \cdot \prod_{i>1} \lambda_i^{\varepsilon_i},\qquad\text{where}\quad
\varepsilon_i = 
\begin{cases}
0, &\text{if $\mu\in K_{\gp_i}^2$}\\
1, &\text{if $\mu\notin K_{\gp_i}^2$.}
\end{cases}
\]
It is now immediate, that $\lambda_1\in \bigcap_{j\neq 1} K_{\gp_j}^2$ while at the same time $\lambda_1\notin K_{\gp_1}^2$. This proves one implication. The other one follows from \cite[Lemma~2.1]{Czo01}.
\end{proof}

\begin{lem}
Let $\gp\in 2\PicK$ be an even point, then for any other even point $\gq\in 2\PicK$, the set $\Sing{\gp}\setminus \SingK$ is fully contained in a square-class of the completion~$K_\gq$.
\end{lem}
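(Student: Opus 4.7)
The plan is to establish two facts: first, that $\Sing{\gp}\setminus\SingK$ is a single coset of $\SingK$ inside $\Sing{\gp}$; and second, that $\SingK\subseteq \squares{K_\gq}$, i.e.\ every element of $\SingK$ is a local square at $\gq$. Combining the two then yields that $\Sing{\gp}\setminus\SingK$ lies inside a single square class of $K_\gq$, which is the claim.

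For the first fact I would argue directly with orders at $\gp$. Because $\gp\in 2\Pic K$, Proposition~\ref{prop_even<->ord=1} supplies an element $\lambda_0\in \sing{\gp}$ with $\ord_\gp\lambda_0$ odd; this $\lambda_0$ witnesses that $\Sing{\gp}\setminus\SingK$ is non-empty. For any other $\lambda_1\in \Sing{\gp}\setminus\SingK$, both $\ord_\gp\lambda_0$ and $\ord_\gp\lambda_1$ are odd, hence $\ord_\gp(\lambda_0\lambda_1)$ is even; since $\Sing{\gp}$ is a group and $\lambda_0\lambda_1\in \Sing{\gp}$, this places $\lambda_0\lambda_1$ in $\SingK$, so $\lambda_1\in \lambda_0\SingK$. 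The reverse inclusion $\lambda_0\SingK \subseteq \Sing{\gp}\setminus\SingK$ is immediate from odd$+$even$=$odd. Thus $\Sing{\gp}\setminus\SingK = \lambda_0\SingK$.

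For the second fact I would invoke Proposition~\ref{prop_even<=>Dl=EE}, this time applied to the even point $\gq$: it asserts $\SingK = \Dl{\gq}$. But by the very definition $\dl{\gq} = \sing{\gq}\cap \squares{K_\gq}$, so every element of $\Dl{\gq}$ is a local square at $\gq$. Combining,
\[
\Sing{\gp}\setminus\SingK = \lambda_0\SingK \subseteq \lambda_0\cdot \squares{K_\gq},
\]
which is a single square class of $K_\gq$, as required.

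No real obstacle presents itself, since both ingredients have already been prepared. The only conceptual point worth flagging is the distinct role of the two hypotheses of $2$-divisibility: evenness of $\gp$ ensures $\Sing{\gp}\setminus\SingK$ is non-empty (and hence a full coset rather than merely a subset of one), while evenness of $\gq$ collapses $\SingK$ to the trivial class in $\sqgd{K_\gq}$.
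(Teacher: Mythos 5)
Your proof is correct and follows essentially the same route as the paper's: both arguments reduce the claim to the two facts that $\Sing{\gp}\setminus \SingK$ is a single coset of $\SingK$ and that $\SingK=\Dl{\gq}\subset \squares{K_\gq}$ by Proposition~\ref{prop_even<=>Dl=EE} applied to the even point $\gq$. The only (immaterial) difference is that you establish the coset claim by comparing orders at $\gp$ via Proposition~\ref{prop_even<->ord=1}, whereas the paper deduces the index $(\Sing{\gp}:\SingK)=2$ from Proposition~\ref{prop_even<=>Dl=EE} applied to $\gp$ together with Proposition~\ref{prop_old_result}.
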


\begin{proof}
Since $\Dl{\gp}= \SingK$ by Proposition~\ref{prop_even<=>Dl=EE}, thus $\SingK$ is a subgroup of $\Sing{\gp}$ of index $(\Sing{\gp}: \SingK)=2$ by Proposition~\ref{prop_old_result}. Take any $\lambda, \mu\in \Sing{\gp}\setminus \SingK$, then $\lambda\cdot \SingK= \mu\cdot \SingK$ and so $\lambda\cdot \mu\in \SingK= \Dl{\gq}\subset \squares{K_\gq}$.
\end{proof}

We define a relation on the set of $2$-divisible points, saying that $\gp\in 2\PicK$ is related to $\gq\in 2\PicK$, when $\Sing{\gp}\setminus \SingK\subset \squares{K_\gq}$. We write $\gp\smile \gq$, when $\gp$ is related to $\gq$. Unfortunately this relation---although symmetric---is neither reflexive nor transitive (see Remark~\ref{rem_nontransitive_smile}). 

\begin{lem}\label{lem_symmetric_smile}
The relation $\smile$ is symmetric.
\end{lem}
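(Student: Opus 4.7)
\bigskip

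\noindent\textbf{Proof plan.} The plan is to reduce the symmetry of $\smile$ to a direct Hilbert-reciprocity computation. Since $\gp$ and $\gq$ are both even, Proposition~\ref{prop_rk2ClO_gp} together with Proposition~\ref{prop_even<=>Dl=EE} gives $[\Sing{\gp}:\SingK] = [\Sing{\gq}:\SingK] = 2$, so both difference sets $\Sing{\gp}\setminus\SingK$ and $\Sing{\gq}\setminus\SingK$ are non-empty. Fix therefore any $\lambda\in \Sing{\gp}\setminus\SingK$ and any $\mu\in \Sing{\gq}\setminus\SingK$. By definition of the sets $\sing{\gp}$ and $\sing{\gq}$, $\ord_\gp\lambda$ is odd while $\ord_\gr\lambda$ is even for every $\gr\neq\gp$, and symmetrically $\ord_\gq\mu$ is odd while $\ord_\gr\mu$ is even for every $\gr\neq\gq$.

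\smallskip

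The key step is to compute the Hilbert symbol $(\lambda,\mu)_\gr$ at every point $\gr\in X$. For $\gr\notin\{\gp,\gq\}$, both $\lambda$ and $\mu$ are $\gr$-adic units modulo $\squares{K_\gr}$, and since the residue characteristic is odd, the Hilbert symbol of two units vanishes; thus $(\lambda,\mu)_\gr=1$. At $\gr=\gq$ the element $\lambda$ is a unit mod squares, so $\lambda\in\{1,u_\gq\}$ in $\sqgd{K_\gq}$, while $\ord_\gq\mu$ is odd. Using the defining identity $(u_\gq,\nu)_\gq=(-1)^{\ord_\gq\nu}$, I obtain
\[
(\lambda,\mu)_\gq=1\iff \lambda\in\squares{K_\gq}.
\]
The completely analogous computation at $\gr=\gp$ yields
\[
(\lambda,\mu)_\gp=1\iff \mu\in\squares{K_\gp}.
\]

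\smallskip

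By Hilbert's reciprocity law, $\prod_{\gr\in X}(\lambda,\mu)_\gr=1$, and since all other factors in the product are trivial, we conclude that $(\lambda,\mu)_\gp=(\lambda,\mu)_\gq$. Combining this with the two equivalences above gives the crucial equivalence
\[
\mu\in\squares{K_\gp}\iff \lambda\in\squares{K_\gq},
\]
valid for \emph{any} choice of $\lambda\in\Sing{\gp}\setminus\SingK$ and $\mu\in\Sing{\gq}\setminus\SingK$.

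\smallskip

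Now assume $\gp\smile\gq$, i.e.\ $\Sing{\gp}\setminus\SingK\subset\squares{K_\gq}$. Fix some $\lambda$ in this set; then $\lambda\in\squares{K_\gq}$. By the equivalence just established, every $\mu\in\Sing{\gq}\setminus\SingK$ satisfies $\mu\in\squares{K_\gp}$, so $\Sing{\gq}\setminus\SingK\subset\squares{K_\gp}$ and hence $\gq\smile\gp$. The only delicate point is the Hilbert-symbol bookkeeping at $\gp$ and $\gq$, but this is handled in one line by the identity $(u_\gr,\nu)_\gr=(-1)^{\ord_\gr\nu}$ stated just before Observation~\ref{obs_taup}; the rest is routine.
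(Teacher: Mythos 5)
Your proof is correct and follows essentially the same route as the paper's: trivial Hilbert symbols (equivalently, split quaternion algebras) at all points other than $\gp$ and $\gq$, the hypothesis $\gp\smile\gq$ killing the symbol at $\gq$, and Hilbert reciprocity forcing $(\lambda,\mu)_\gp=1$, whence $\mu\in\squares{K_\gp}$ because $\ord_\gp\lambda$ is odd. Your packaging of the conclusion as the symmetric biconditional $\mu\in\squares{K_\gp}\iff\lambda\in\squares{K_\gq}$ is a clean touch but not a different argument.
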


\begin{proof}
Take $\lambda\in \Sing{\gp}\setminus \SingK$ and $\mu\in \Sing{\gq}\setminus \SingK$. Assume that $\gp$ is related to $\gq$, so that $\lambda\in \squares{K_\gq}$. Take any point $\gr$ distinct from both $\gp$ and $\gq$, then a local quaternion algebra $\quat{\lambda,\mu}{K_\gr}$ splits, since $\ord_\gr\lambda\equiv \ord_\gr\mu\equiv 0\pmod{2}$. Next, also $\quat{\lambda,\mu}{K_\gq}$ splits, because $\lambda$ is a square in $K_\gq$. It follows from Hilbert's reciprocity law that $\quat{\lambda,\mu}{K_\gp}$ splits, as well. But $\ord_\gp\lambda\equiv 1\pmod{2}$, hence $\mu$ must be a local square at $\gp$. Consequently $\Sing{\gq}\setminus \SingK$ is contained in $\squares{K_\gp}$ and so $\gq$ is related to $\gp$.
\end{proof}

\begin{rem}\label{rem_nontransitive_smile}
While it is obvious (and harmless) that $\smile$ is not reflexive, but it is less obvious that in general it is not transitive. Take the function field of an elliptic curve $X$ over $\FF_3$ given by the equation $y^2 = x^3 + x - 1$. Consider the points $\gp, \gq, \gr\in X$, where $\gp$ is the common zero of $x$ and $x^3+x$; $\gq$ is the common zero of $x^4 + x^2 + 2x + 1$ and $y + x^2 + 2x$ and finally $\gr$ is the common zero of $x^4 + x^2 + 2x + 1$ and $y + 2x^2 + x$. Then, using computer algebra system Magma one can check that,  $\gp\smile \gq$ and $\gp \smile \gr$, nevertheless the points $\gq$ and $\gr$ are not related.
\end{rem}

Let us now recall the notion of a \term{small equivalence}. Let $\emptyset\neq \CS\subset X$ be a finite (hence closed) subset of $X$. We say that $\CS$ is \term{sufficiently large} if $\rk\Cl\CO_{X\setminus \CS} = 0$. If $\CS\subset X$ be a sufficiently large set of points of $K$, then a triple $\bigl(T_{\CS},t_{\CS},(t_{\gp}\st \gp\in \CS)\bigr)$ is called (cf. \cite[\S6]{PSCL94}) a \term{small $\CS$-equivalence} of the field $K$ if
\begin{enumerate}\renewcommand{\theenumi}{SE\arabic{enumi}}
\item $T_{\CS}\colon \CS\to X$ is injective,
\item $t_{\CS}\colon \Sing{X\setminus\CS}\to \Sing{X\setminus T\CS}$ is a group isomorphism,
\item\label{it_hilbert_pres} for every $\gp\in\CS$ the map $t_{\gp}\colon \sqgd{K_{\gp}}\to \sqgd{K_{T\gp}}$ is an isomorphism of local square class groups preserving Hilbert symbols, in the sense that
\[
(x,y)_{\gp}=(t_{\gp}x,t_{\gp}y)_{T\gp},\qquad\text{for all }x,y\in \sqgd{K_{\gp}};
\]
\item the following diagram commutes
\begin{equation}\label{eq_diagram}
\begin{CD}
\Sing{X\setminus\CS} @>i_{\CS}>> \prod\limits_{\gp\in\CS}\sqgd{K_{\gp}}\\
  @VVt_{\CS}V @VV\prod_{\gp\in\CS}t_{\gp}V\\
\Sing{X\setminus T\CS} @>i_{T\CS}>> \prod\limits_{\gp\in\CS}\sqgd{K_{T\gp}}
\end{CD}
\end{equation}
where the maps $i_{\CS}=\prod_{\gp\in\CS} i_{\gp}$ and $i_{T\CS}=\prod_{\gq\in T\CS} i_{\gq}$ are the diagonal homomorphisms, $i_{\gp}\colon \Sing{X\setminus\CS}\to \sqgd{K_{\gp}}$, $i_{\gq}\colon \Sing{X\setminus T\CS}\to \sqgd{K_{\gq}}$.
\end{enumerate}
We say that the local isomorphism $t_{\gp}\colon \sqgd{K_{\gp}}\to\sqgd{K_{T\gp}}$ is \term{tame}, when
\[
\ord_\gp \lambda\equiv \ord_{T\gp}t_{\gp}\lambda \pmod{2}\qquad\text{for every }\lambda\in \sqgd{K_\gp}.
\]
The following result follows from \cite[Theorem~2 and Lemma~4]{PSCL94}:

\begin{thm}\label{thm_PSCL}
Every small $\CS$-equivalence $\bigl(T_{\CS},t_{\CS},(t_{\gp}\st \gp\in \CS)\bigr)$ of the field $K$ can be extended to a self-equivalence $(\Tt)$ of $K$ tame on $X\setminus\CS$. Moreover, the self-equivalence $(\Tt)$ is tame at $\gp\in \CS$ if and only if the local isomorphism $t_{\gp}$ is tame.
\end{thm}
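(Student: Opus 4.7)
The plan is to extend the small $\CS$-equivalence to a full self-equivalence in two stages, exploiting that sufficient largeness $\rk \Cl\CO_{X\setminus\CS}=0$ forces, by Proposition~\ref{prop_old_result}.\eqref{it_rk2Delta}, the inclusion $\dl{X\setminus\CS}\subset \squares{K}$. Consequently the diagonal map $i_\CS\colon \Sing{X\setminus\CS}\to \prod_{\gp\in\CS}\sqgd{K_\gp}$ is injective, so the local data at $\CS$ already detect everything happening ``in the middle''.

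First, I would extend the injection $T_\CS$ to a bijection $T\colon X\to X$; after possibly enlarging $\CS$ by finitely many additional points (and extending the small-equivalence data tamely on them) one may assume that $T\CS$ is likewise sufficiently large. For every $\gq\in X\setminus\CS$ declare $t_\gq\colon \sqgd{K_\gq}\to \sqgd{K_{T\gq}}$ to be the canonical tame isomorphism $\pi_\gq\mapsto \pi_{T\gq}$, $u_\gq\mapsto u_{T\gq}$; the characterization $(u_\gq,\lambda)_\gq=(-1)^{\ord_\gq\lambda}$ shows immediately that each such $t_\gq$ preserves Hilbert symbols. Amalgamate $t_\CS$ with the family $\{t_\gq\st \gq\notin\CS\}$ into a single automorphism $t\colon \sqgd{K}\to \sqgd{K}$: weak approximation lets one represent any $\lambda\in \sqgd{K}$, modulo squares, as the product of an element of $\Sing{X\setminus\CS}$ and a local correction controlling its image in $\prod_{\gp\in\CS}\sqgd{K_\gp}$, and the commutativity of diagram~\eqref{eq_diagram} guarantees that the two prescriptions agree on the overlap. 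The self-equivalence identity $(\lambda,\mu)_\gp = (t\lambda,t\mu)_{T\gp}$ now holds at every prime of $\CS$ by hypothesis~\eqref{it_hilbert_pres}, at every prime of $X\setminus(\CS\cup T\CS)$ by the tame choice of $t_\gq$, and any remaining compatibility is forced by Hilbert's reciprocity law.

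For the second assertion, apply Observation~\ref{obs_ord(tu)->wild}: the self-equivalence $(\Tt)$ is wild at $\gp\in\CS$ if and only if $\ord_{T\gp}tu_\gp\equiv 1\pmod{2}$, and since the restriction of $t$ to $\sqgd{K_\gp}$ coincides with $t_\gp$, this is precisely the failure of tameness of $t_\gp$. The main obstacle in the outline is the gluing step producing $t$; that it does yield a well-defined global automorphism is essentially the technical core of \cite[Theorem~2]{PSCL94}, and hinges on the injectivity of $i_\CS$ noted above together with weak approximation and Hilbert reciprocity.
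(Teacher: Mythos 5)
The paper offers no proof of this statement at all: Theorem~\ref{thm_PSCL} is imported verbatim with the one-line justification that it ``follows from \cite[Theorem~2 and Lemma~4]{PSCL94}''. Since you, too, ultimately defer the amalgamation step --- which is the entire content of the theorem --- to \cite[Theorem~2]{PSCL94}, your write-up is in substance the same citation, wrapped in a sketch of the surrounding bookkeeping. The parts of the sketch that you do carry out are partly right: the identification of $\ker i_\CS$ with $\Dl{X\setminus\CS}$ and hence the injectivity of $i_\CS$ from $\rk\Cl\CO_{X\setminus\CS}=0$ is correct, and the deduction of the second assertion from Observation~\ref{obs_ord(tu)->wild} is fine once one knows that the extension restricts to $t_\gp$ on $\sqgd{K_\gp}$ for $\gp\in\CS$.

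Two steps would fail, however, if the sketch were promoted to an actual proof. First, you cannot extend $T_\CS$ to an \emph{arbitrary} bijection of $X$ and then set $t_\gq(\pi_\gq)=\pi_{T\gq}$, $t_\gq(u_\gq)=u_{T\gq}$ off $\CS$: the characterization $(u_\gq,\lambda)_\gq=(-1)^{\ord_\gq\lambda}$ controls symbols against the primary unit but says nothing about $(\pi_\gq,\pi_\gq)_\gq=(\pi_\gq,-1)_\gq$, which equals $1$ exactly when $-1$ is a local square at $\gq$. So the canonical tame map preserves Hilbert symbols only when $\zeta$ has the same local square status at $\gq$ and at $T\gq$ (over $\FF_q$ with $q\equiv 3\pmod 4$ this forces $\deg\gq\equiv\deg T\gq\pmod 2$). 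Finding admissible targets $T\gq$ with prescribed local behaviour of the finitely many square classes already in play is precisely where \cite{PSCL94} invokes a density theorem, and it is why the construction there enlarges $\CS$ one prime at a time rather than performing a single global amalgamation. Second, ``weak approximation plus Hilbert reciprocity'' does not by itself yield a well-defined automorphism of the infinite group $\sqgd{K}$ agreeing with the prescribed local data; making the inductive limit converge is the technical core you are pointing back to. In short: your outline is a reasonable gloss on the cited result, but it is not an independent proof, and as a proof it has the two concrete gaps above.
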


\begin{rem}
In the case considered in this paper (that is over global function fields) any local square-class group $\sqgd{K_\gp}$ consists of just four elements $\{1, u_\gp, \pi_\gp, u_\gp\pi_\gp\}$, with $\ord_\gp u_\gp\even$ and $\ord_\gp \pi_\gp\odd$. For two square classes $\lambda, \mu\in \sqgd{K_\gp}$, $\lambda, \mu\neq 1$, the Hilbert symbol can be computed with the formula
\[
(\lambda, \mu)_\gp = 1\iff \lambda = \mu.
\]
Therefore, \emph{every} bijection of the local square-class groups mapping squares to squares is an isomorphism and preserves the Hilbert symbols. Consequently, the condition \eqref{it_hilbert_pres} is always satisfied for this type of fields. 
\end{rem}

\begin{prop}\label{prop_even->wild}
Let $K$ be a global function field and $X$ an associated smooth curve. Let $\gp, \gp_1, \dotsc, \gp_l$ be $2$-divisible points such that $\gp_i\smile \gp_j$ for every $i\neq j$ Then, there is a self-equivalence $(\Tt)$ of $K$ such that:
\begin{itemize}
\item $\gp$ is the unique wild point of $(\Tt)$, i.e. $\wild{\Tt} = \{\gp\}$;
\item $T$ preserves the selected points in the sense that 
\[ 
T\gp = \gp\quad\text{and}\quad T\gp_i= \gp_i\text{ for }i=1, \dotsc, l;
\]
\item for every $\gp_i\smile\gp$, the isomorphism $t$ restricted to the local square-class group $\sqgd{K_{\gp_i}}$ is the identity;
\item for every $\gp_i\not\smile\gp$, the isomorphism $t$ restricted to the local square-class group $\sqgd{K_{\gp_i}}$ is a transposition of the square-classes of odd orders.
\end{itemize}
\end{prop}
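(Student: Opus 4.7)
The plan is to apply Theorem~\ref{thm_PSCL} by exhibiting a suitable small $\CS$-equivalence. I would enlarge $\{\gp,\gp_1,\dotsc,\gp_l\}$ to a finite sufficiently large subset $\CS\subset X$. Because $\rk\Cl\CO_{X\setminus\CS}=0$, Proposition~\ref{prop_old_result} makes $\Dl{X\setminus\CS}$ trivial, so $i_\CS\colon\Sing{X\setminus\CS}\hookrightarrow\prod_{\gq\in\CS}\sqgd{K_\gq}$ is injective with image $I$ of $\FF_2$-rank $|\CS|$; a standard Hilbert-reciprocity argument shows that $I$ is a maximal totally isotropic subspace for the sum of local Hilbert forms.

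The central auxiliary object is an element $\eta\in\sing{\gp}\setminus\singK$, which exists by Proposition~\ref{prop_even<->ord=1}. By the unnamed lemma immediately preceding Lemma~\ref{lem_symmetric_smile}, the coset $\Sing{\gp}\setminus\SingK$ maps into a single square class of $K_\gr$ for every other even point $\gr$; combined with the hypothesis $\gp_i\smile\gp_j$ for $i\ne j$, this determines $\eta_{\gp_i}=1$ when $\gp_i\smile\gp$ and $\eta_{\gp_i}=u_{\gp_i}$ when $\gp_i\not\smile\gp$. Moreover $\eta_\gp$ is one of the two odd-order classes of $\sqgd{K_\gp}$, and $\eta_\gq\in\{1,u_\gq\}$ for every other $\gq\in\CS$. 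Now take $T_\CS=\id_\CS$ and, uniformly in $\gq\in\CS$, set $t_\gq(x):=x\cdot\eta_\gq^{g_\gq(x)}$ with $\FF_2$-linear functionals $g_\gq$ chosen as follows: at $\gp$ take the unique $g_\gp$ with $g_\gp(u_\gp)=1$ and $g_\gp(\eta_\gp)=0$, which makes $t_\gp$ a bijection (hence automatically Hilbert-preserving by the Remark after Theorem~\ref{thm_PSCL}) and wild since $t_\gp(u_\gp)=u_\gp\eta_\gp$ has odd order; at $\gp_i\smile\gp$ the factor $\eta_{\gp_i}=1$ gives $t_{\gp_i}=\id$; at $\gp_i\not\smile\gp$ take $g_{\gp_i}(x)\equiv\ord_{\gp_i}x\pmod 2$, recovering the required transposition of $\pi_{\gp_i}$ and $u_{\gp_i}\pi_{\gp_i}$ (tame, since even-order classes are fixed); and analogously at the auxiliary points $\gq_j\in\CS$, each resulting $t_{\gq_j}$ being tame.

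It remains to show that $\prod_{\gq\in\CS}t_\gq$ maps $I$ into itself; $t_\CS$ is then the induced automorphism and the axioms of a small $\CS$-equivalence are fulfilled (axiom SE3 is automatic, as noted). Writing $c_\gq(x):=\eta_\gq^{g_\gq(x)}$, the preservation of $I$ reduces, via the characterization of $I$ as its own Hilbert annihilator, to
\[
\sum_{\gq\in\CS}\bigl(c_\gq(\lambda_\gq),\beta_\gq\bigr)_\gq \;=\; \sum_{\gq\in\CS} g_\gq(\lambda_\gq)\,(\eta_\gq,\beta_\gq)_\gq \;=\; 0
\]
for all $\lambda,\beta\in\Sing{X\setminus\CS}$. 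I would derive this from Hilbert reciprocity for $\eta$---whose restriction to $\CS$ reads $\sum_{\gq\in\CS}(\eta_\gq,\lambda_\gq)_\gq=0$---after a case distinction controlled by the value of $(\pi_\gp,\pi_\gp)_\gp$, which forces the compatible choice of $\eta_\gp\in\{\pi_\gp,u_\gp\pi_\gp\}$ and the matching wild type of $t_\gp$. This book-keeping is the main obstacle I anticipate; in particular, further auxiliary points $\gq_j$ may have to be adjoined so that the tame $g_{\gq_j}$ absorb residual Hilbert-symbol contributions. Once $t_\CS$ is constructed, Theorem~\ref{thm_PSCL} lifts the small $\CS$-equivalence to a self-equivalence $(\Tt)$ of $K$, tame on $X\setminus\CS$ and at every point of $\CS$ except $\gp$; the remaining assertions (unique wild point $\gp$, fixing of the distinguished points, and the explicit shape of $t$ on each $\sqgd{K_{\gp_i}}$) are then immediate from the construction.
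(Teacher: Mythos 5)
Your overall strategy coincides with the paper's: take $T_\CS=\id$ on a sufficiently large $\CS$ containing the distinguished points, prescribe the local maps ($t_\gp$ a wild transposition, $t_{\gp_i}$ the identity or the tame transposition according to whether $\gp\smile\gp_i$), and lift by Theorem~\ref{thm_PSCL}. Your reduction of axiom (SE4) to the vanishing of $B(\lambda,\beta)=\sum_{\gq\in\CS}g_\gq(\lambda_\gq)(\eta_\gq,\beta_\gq)_\gq$ on $I\times I$ is correct, and the case distinction you anticipate at $\gp$ is in fact unnecessary: reciprocity applied to the pair $(\eta,-1)$ forces $(\eta_\gp,\eta_\gp)_\gp=0$, so $g_\gp$ automatically coincides with $(\eta_\gp,\cdot)_\gp$. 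The genuine gap is the step you defer as ``book-keeping'': the vanishing of $B$ does \emph{not} follow from Hilbert reciprocity. Let $S'$ be the set of $\gq\in\CS\setminus\{\gp\}$ at which $\eta$ is a non-square unit. Reciprocity for $(\eta,\beta)$ gives $(\eta_\gp,\beta_\gp)_\gp=\sum_{\gq\in S'}\ord_\gq\beta$, and substituting this together with $g_\gp=(\eta_\gp,\cdot)_\gp$ and $g_\gq=\ord_\gq$ on $S'$ yields
\[
B(\lambda,\beta)=\sum_{\substack{\gq,\gq'\in S'\\ \gq\neq\gq'}}\ord_\gq\lambda\cdot\ord_{\gq'}\beta,
\]
which is identically zero on $I\times I$ only when $\card{S'}\leq 1$ (barring accidental relations among the order functionals). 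You have no control over whether $\eta$ is a local square at the auxiliary points of $\CS$, so $S'$ will in general contain several of them and the construction breaks; adjoining further auxiliary points, your proposed remedy, only enlarges $S'$ and makes matters worse.

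The missing ingredient is precisely the part of the hypotheses your argument never touches: the $2$-divisibility of the $\gp_i$ themselves. The paper sets $Y=X\setminus\{\gp,\gp_1,\dotsc,\gp_l\}$, uses the evenness of \emph{all} the distinguished points together with Proposition~\ref{prop_even<=>Dl=EE} and Lemma~\ref{lem_rk2EZ_vs_rk2EY} to obtain $\Dl{Y}=\SingK$, chooses the auxiliary points $\gq_1,\dotsc,\gq_m$ as a basis of $\sfrac{\Cl\CO_Y}{2\Cl\CO_Y}$, and invokes Lemma~\ref{lem_Delta_basis} to produce elements $\mu_j\in\dl{Y}$ that are local squares at every distinguished point and realize arbitrary square-class patterns at the $\gq_j$. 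Multiplying $\eta$ by suitable $\mu_j$ then forces $S'\subseteq\{\gp_i\st\gp\not\smile\gp_i\}$ without disturbing the prescribed local data at $\gp,\gp_1,\dotsc,\gp_l$; you need this normalization (or an equivalent) before your computation can close up, and with it your argument does work whenever at most one $\gp_i$ is unrelated to $\gp$. (Be aware that when two distinct $\gp_i,\gp_j$ are both unrelated to $\gp$, the displayed sum still need not vanish; the same difficulty is present, and not visibly resolved, in the paper's own claim that the commutativity of the diagram is ``immediate''.)
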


\begin{proof}
Take an open subset $Y:= X\setminus \{\gp; \gp_1, \dotsc, \gp_l\}$ of $X$ and let $m := \rk \Cl\CO_Y$. Observe that
\begin{multline*}
\rk \Dl{Y} = \rk \Cl \CO_Y = \\
= \rk \Cl\CO_\gp - \rk\langle \gp_1 + 2\Cl\CO_\gp, \dotsc, \gp_l + 2\Cl\CO_\gp\rangle =\\
= \rk \Cl\CO_\gp = \rk \Dl{\gp},
\end{multline*}
where the first and the last equalities follows from Proposition~\ref{prop_old_result}, the second follows from Lemma~\ref{lem_rk2EZ_vs_rk2EY}, while the third  one is due to the fact that every $\gp_i$ is $2$-divisible in $\PicK$ so consequently also in $\Cl \CO_\gp$. Therefore, the $\FF_2$-linear spaces $\Dl{\gp}$ and $\Dl{Y}$ are equal, but the former one is just $\SingK$ by Proposition~\ref{prop_even<=>Dl=EE}. All in all, $\Dl{Y} = \SingK$. Take a basis $\gq_1, \dotsc, \gq_m$ of $\sfrac{\Cl\CO_Y}{2\Cl\CO_Y}$. Lemma~\ref{lem_Delta_basis} asserts that there are elements $\mu_1, \dotsc, \mu_m\in \dl{Y}$ linearly independent in $\Dl{Y}$ and such that $\mu_i \in \squares{K_{\gq_j}}$ if and only if $i\neq j$. Clearly, they form a basis of $\Dl{Y} = \SingK$. Now, $\rk \sfrac{\Sing{\gp}}{\SingK} = 1$ by Proposition~\ref{prop_even<=>Dl=EE} and Proposition~\ref{prop_old_result}. Likewise, $\rk \sfrac{\Sing{\gp_i}}{\SingK} = 1$ for every $i= 1, \dotsc, l$. Therefore, there are square-classes 
\[
\lambda \in \Sing{\gp}\setminus \SingK,\quad
\lambda_1 \in \Sing{\gp_1}\setminus \SingK,\dotsc, 
\lambda_l \in \Sing{\gp_l}\setminus \SingK.
\]
By the assumption $\gp_i\smile \gp_j$ for every $1\leq i\neq j\leq l$, hence every $\lambda_i$ is a local square at every $\gp_j$ for $j\neq i$. Multiplying, if necessary by appropriate $\mu_j$'s we may assume without loss of generality, that $\lambda, \lambda_1, \dotsc, \lambda_l$ are local squares at $\gq_j$ for every $j = 1, \dotsc, m$.

Denote $\CS:= \{ \gp; \gp_1, \dotsc, \gp_l; \gq_1, \dotsc, \gq_m\}$ and let $Z := X\setminus \CS \subset Y$. It follows from Lemma~\ref{lem_rk2EZ_vs_rk2EY} that $\rk\Cl\CO_Z = 0$ and so $\CS$ is a sufficiently large set. We claim that the set $\SB := \{ \lambda; \lambda_1, \dotsc, \lambda_l; \mu_1, \dotsc, \mu_m\}$ forms a basis of the $\FF_2$-linear space $\Sing{Z}$. First, we show that it is linearly independent. Suppose, a contrario, that it is not. Thus
\[
\nu := \lambda^a \cdot \prod_{i=1}^l \lambda_i^{b_i}\cdot \prod_{j=1}^m \mu_j^{c_j}
\]
is a square in $K$, for some $a, b_1, \dotsc, b_l, c_1, \dotsc, c_m\in \FF_2$. This means that $0\equiv \ord_\gp \nu\equiv a\pmod{2}$, since all the other elements have even order at $\gp$, consequently $a = 0$. Similarly, for every $1\leq i\leq l$, $0\equiv \ord_{\gp_i}\nu\equiv b_i\pmod{2}$ so also $b_1= \dotsb= b_l = 0$. Finally, $c_1 = \dotsb = c_m = 0$, because $\mu_1, \dotsc, \mu_m$ are linearly independent in $\Dl{Y}$, a subspace of $\SingK$. Further, Proposition~\ref{prop_old_result} asserts that $\dim_{\FF_2}\Sing{Z} = \rk \Cl\CO_Z + \card\CS = \card\SB$, proving that $\SB$ is a basis of $\Sing{Z}$.

Observe that, if $\gp$ is related to \emph{every} point $\gp_i$, $i=1, \dotsc, l$, then a $\gp$-primary unit $u$ does not belong to $\Sing{Z}$. On the other hand, if $\gp\not\smile \gp_i$ for some $i\in \{1, \dotsc, l\}$, then the obtained above element $\lambda_i$ is a $\gp$-primary unit (and symmetrically $\lambda$ is a $\gp_i$-primary unit).

Construct a triple $\bigl( T_\CS, t_\CS, (t_\gr\st \gr\in \CS)\bigr)$ in the following way:
\begin{itemize}
\item let $T_\CS\colon \CS\to \CS$ be the identity;
\item define the automorphism $t_\CS\colon \Sing{Z}\to \Sing{Z}$ fixing its values on the basis $\SB$:
\begin{itemize}
\item $t_\CS(\lambda) := \lambda$;
\item $t_\CS(\lambda_i) := \begin{cases} \lambda_i,& \text{if $\gp\smile \gp_i$,}\\ \lambda\lambda_i,& \text{if $\gp\not\smile\gp_i$;}\end{cases}$
\item $t_\CS(\mu_j) := \mu_j$ for $j= 1,\dotsc, m$.
\end{itemize}
\item finally, the automorphisms of the local square-class groups are given as follows:
\begin{itemize}
\item $t_\gp$ is the transposition $(u,u\lambda)$ on $\sqgd{K_\gp} = \{1, u, \lambda, u\lambda\}$ (recall that $u = \lambda_i\pmod{\squares{K_\gp}}$ whenever $\gp\not\smile\gp_i$);
\item for a point $\gp_i$ related to $\gp$, take $t_{\gp_i}$ to be the identity on $\sqgd{K_{\gp_i}}$;
\item for a point $\gp_i$ not related to $\gp$, let $t_{\gp_i}$ be a ``tame transposition'' $(\lambda_i, \lambda\lambda_i)$ on the group $\sqgd{K_{\gp_i}}= \{1, \lambda, \lambda_i, \lambda\lambda_i\}$;
\item eventually, for the remaining points $\gq_1, \dotsc, \gq_m$, let $t_{\gq_j}$ be the identity on the corresponding square class group.
\end{itemize}
\end{itemize}
The commutativity of the diagram~\eqref{eq_diagram} is now immediate. It follows that the triple $\bigl( T_\CS, t_\CS, (t_\gr\st \gr\in \CS)\bigr)$ is a small equivalence and Theorem~\ref{thm_PSCL} asserts that it can be extended to a self-equivalence $(\Tt)$ of $K$ tame on $Z$. Since only $t_\gp$ is wild, $\gp$ is the unique wild point of $(\Tt)$.
\end{proof}

\begin{lem}
Let $K$ be a global function field and $X$ an associated smooth curve and $(\Tt)$ be a self-equivalence of $K$. If $(\Tt)$ has a unique wild point $\gp$, then $\gp\in 2\PicK$.
\end{lem}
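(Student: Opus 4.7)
The plan is to exploit the fact that tameness at every point $\gq\neq\gp$ forces $t$ to preserve parities of orders at $\gq$, whence $t$ must carry $\Sing{\gp}$ into $\Sing{T\gp}$. Combined with the rank criterion of Proposition~\ref{prop_rk2ClO_gp}, this reduces the claim to a local dichotomy at $\gp$ that is settled by the structural results of Section~\ref{sec_2-div}.

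First, I would verify that the global map $t$ restricts to a group isomorphism $\Sing{\gp}\iso\Sing{T\gp}$. Indeed, for any $\lambda\in\Sing{\gp}$ and any $\gq\neq\gp$, tameness of $(\Tt)$ at $\gq$ gives $\ord_{T\gq}(t\lambda)\equiv\ord_{\gq}\lambda\equiv 0\pmod 2$, so $t\lambda\in\Sing{T\gp}$. The reverse inclusion is obtained by applying the same reasoning to the inverse self-equivalence $(T^{-1},t^{-1})$, which one readily checks is a self-equivalence wild only at $T\gp$. Consequently $\rk\Sing{\gp}=\rk\Sing{T\gp}$, Proposition~\ref{prop_old_result}.\eqref{it_rk2E_Y} yields $\rk\Cl\CO_{\gp}=\rk\Cl\CO_{T\gp}$, and Proposition~\ref{prop_rk2ClO_gp} then guarantees that $\gp\in 2\Pic K$ if and only if $T\gp\in 2\Pic K$.

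I would then split the proof into two cases, according to whether every element of $\SingK$ is a local square at $\gp$. If the answer is yes, then the very definition of $\Dl{\gp}$ gives $\Dl{\gp}=\SingK\cap\squares{K_{\gp}}=\SingK$, and Proposition~\ref{prop_even<=>Dl=EE} yields $\gp\in 2\Pic K$ at once. If the answer is no, then there exists $\lambda\in\SingK$ whose local class at $\gp$ is the primary unit $u_{\gp}$. Since $t$ factors through the local square-class groups, the local class of $t\lambda$ at $T\gp$ coincides with $t_{\gp}u_{\gp}$, which has odd order by Observation~\ref{obs_ord(tu)->wild} together with the wildness of $(\Tt)$ at $\gp$. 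The first step then places $t\lambda$ in $\sing{T\gp}$ with $\ord_{T\gp}(t\lambda)$ odd, so Proposition~\ref{prop_even<->ord=1} forces $T\gp\in 2\Pic K$, and the equivalence from the first step transports this conclusion back to $\gp\in 2\Pic K$.

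The main subtlety is that a self-equivalence naturally relates the local data at $\gp$ to the local data at its image $T\gp$ rather than back at $\gp$ itself; the rank coincidence $\rk\Cl\CO_{\gp}=\rk\Cl\CO_{T\gp}$ from the first step, interpreted through Proposition~\ref{prop_rk2ClO_gp}, is precisely the bridge that turns the witness produced at $T\gp$ into the desired conclusion at $\gp$.
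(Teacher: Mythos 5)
Your proof is correct and uses essentially the same ingredients as the paper's: the tame transport $t\Sing{\gp}=\Sing{T\gp}$, a $\gp$-primary unit in $\SingK$ whose image acquires odd order at $T\gp$ by wildness (Observation~\ref{obs_ord(tu)->wild}), Proposition~\ref{prop_even<->ord=1}, and the rank dichotomy of Proposition~\ref{prop_rk2ClO_gp}. The only difference is organizational: the paper argues by contradiction, which automatically places it in your second case, whereas you give a direct two-case argument and handle the case $\SingK\subseteq\squares{K_\gp}$ separately via Proposition~\ref{prop_even<=>Dl=EE}.
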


\begin{proof}
By the assumption $\wild{\Tt} = \{\gp\}$. Denote $\gq := T\gp$. Suppose, a contrario, that $\gp$ is not $2$-divisible.  Thus, Proposition~\ref{prop_even<->ord=1} asserts that every element of $\sing{\gp}$ has an even order at $\gp$, in particular $\Sing{\gp} = \SingK$. Now, it follows from Proposition~\ref{prop_old_result}\eqref{it_rk2E_Y_var}, that there is an element $\lambda\in K$ such that $\SingK = \Sing{\gp} = \langle \lambda\rangle\oplus \Dl{\gp}$. Clearly, $\ord_\gp\lambda \equiv 0\pmod{2}$ and $\lambda$ is not a local square at $\gp$, that is $\lambda$ is a $\gp$-primary unit. 

Now, $\gp$ is a wild point of $(\Tt)$, hence $\ord_\gq t\lambda\equiv 1\pmod{2}$ by Observation~\ref{obs_ord(tu)->wild}. It follows from Proposition~\ref{prop_even<->ord=1} that $\gq$ is an even point of $K$. It is straightforward to show that $t\Sing{\gp}= \Sing{T\gp}= \Sing{\gq}$. In particular, the $2$-ranks must agree:
\[
\rk \Sing{\gp} = \rk \Sing{\gq}.
\]
Use Proposition~\ref{prop_old_result} to express these $2$-ranks as:
\[
\rk \Cl\CO_\gp + 1= \rk \Cl\CO_\gq + 1.
\]
Now, $\gq$ is $2$-divisible in $\PicK$, while $\gp$ is not. Proposition~\ref{prop_rk2ClO_gp} asserts that the left-hand side equals $\rk \Pic^0 X+1$, while the right-hand side is $\rk \Pic^0 X+2$. This is clearly a contradiction.
\end{proof}

Combining now Proposition~\ref{prop_even->wild} with the above lemma, we arrive at our first main result.

\begin{thm}\label{thm_even=wild}
Let $K$ be a global function field and $X$ an associated smooth curve. Given a point $\gp\in X$, the following two conditions are equivalent:
\begin{itemize}
\item $\gp$ is $2$-divisible in $\PicK$;
\item $\gp$ is the unique wild point of some self-equivalence of $K$.
\end{itemize}
\end{thm}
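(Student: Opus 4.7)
The plan is to deduce Theorem~\ref{thm_even=wild} as an almost immediate corollary of the two results that immediately precede it, since Proposition~\ref{prop_even->wild} handles the forward direction and the preceding (unnamed) lemma handles the reverse direction. The work has essentially been done, and the theorem packages the two implications into a single equivalence.

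For the implication $(\gp \in 2\Pic K) \Rightarrow (\gp\text{ is the unique wild point of some self-equivalence})$, I would simply invoke Proposition~\ref{prop_even->wild} with $l = 0$, i.e.\ specializing to the case of no auxiliary $2$-divisible points $\gp_1, \dotsc, \gp_l$. With $l=0$ the hypothesis ``$\gp_i \smile \gp_j$ for all $i\neq j$'' is vacuous, and the proposition produces a self-equivalence $(\Tt)$ of $K$ whose unique wild point is $\gp$. Note that no further verification is needed: the construction in Proposition~\ref{prop_even->wild} already exhibits the small equivalence over a sufficiently large set $\CS$, uses Theorem~\ref{thm_PSCL} to extend it to a self-equivalence tame off $\CS$, and checks that only the local isomorphism at $\gp$ is wild.

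For the converse implication $(\gp\text{ is the unique wild point of some self-equivalence}) \Rightarrow (\gp \in 2\Pic K)$, I would cite the preceding lemma verbatim, whose proof argued by contradiction: assuming $\gp \notin 2\Pic K$, Proposition~\ref{prop_even<->ord=1} forces $\Sing{\gp} = \SingK$, one produces a $\gp$-primary unit $\lambda$ in $\SingK$, Observation~\ref{obs_ord(tu)->wild} then forces $T\gp$ to be $2$-divisible via Proposition~\ref{prop_even<->ord=1}, and finally the $2$-rank computation (comparing $\rk\Sing{\gp}$ with $\rk\Sing{T\gp}$ via Proposition~\ref{prop_rk2ClO_gp}) yields a contradiction.

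There is no genuine obstacle at this stage, since the hard work is already concentrated in Proposition~\ref{prop_even->wild} (the construction of a small $\CS$-equivalence realizing $\gp$ as the sole wild point) and in the local–global rank argument used in the preceding lemma. The only thing to write is essentially one sentence invoking both results, possibly with a brief remark that in the forward direction we take $l=0$ so that the hypothesis on $\smile$ is automatically satisfied.
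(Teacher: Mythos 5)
Your proposal matches the paper exactly: the authors also obtain Theorem~\ref{thm_even=wild} by combining Proposition~\ref{prop_even->wild} (the case of no auxiliary points) with the immediately preceding lemma on unique wild points. Nothing further is required.
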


Looking at Proposition~\ref{prop_even->wild} it is obvious that if we have a set of even points and each of them is related to every other, then we can build a number of self-equivalences, each wild at precisely one of these points and preserving the rest. Then the wild set of the composition of all these self-equivalences consists of all our (related) even points. It turns out that this is still true, even when not all the points are related. Theorem~\ref{thm_main2} below, not only generalizes one implication of Theorem~\ref{thm_even=wild}, but also constitutes a direct counterpart of \cite[Theorem~1.1]{CR14} for the case of global function fields.

\begin{thm}\label{thm_main2}
Let $K$ be a global function field and $X$ be its associated smooth curve. Given a finitely many points $\gp_1, \dotsc, \gp_n\in X$, that are $2$-divisible in $\PicK$, there is a self-equivalence $(\Tt)$ of $K$ such that $\gp_1, \dotsc, \gp_n$ are precisely its wild points, i.e. $\wild{\Tt} = \{\gp_1, \dotsc, \gp_n\}$.
\end{thm}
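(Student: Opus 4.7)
The plan is to mimic the strategy of Proposition~\ref{prop_even->wild}, but now to build a single small $\CS$-equivalence in which every one of $\gp_1, \dotsc, \gp_n$ is wild simultaneously, and then to invoke Theorem~\ref{thm_PSCL} to extend it to a self-equivalence $(\Tt)$ of $K$ tame off $\CS$. The three ingredients we need are: a sufficiently large set $\CS$, a convenient basis of $\Sing{X\setminus\CS}$, and compatible local maps realising wildness at every $\gp_i$ while staying tame at every auxiliary point.

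Set $Y := X\setminus\{\gp_1,\dotsc,\gp_n\}$ and $m := \rk\sfrac{\Cl\CO_Y}{2\Cl\CO_Y}$, and pick points $\gq_1,\dotsc,\gq_m\in Y$ whose classes form a basis of $\sfrac{\Cl\CO_Y}{2\Cl\CO_Y}$. Then $\CS := \{\gp_1,\dotsc,\gp_n,\gq_1,\dotsc,\gq_m\}$ is sufficiently large by Lemma~\ref{lem_rk2EZ_vs_rk2EY}, and with $Z := X\setminus\CS$ we have $\rk\Cl\CO_Z=0$. Lemma~\ref{lem_Delta_basis} produces $\mu_1,\dotsc,\mu_m\in\dl{Y}$ with the Kronecker property $\mu_j\in K_{\gq_i}^2\iff i\neq j$, and since each $\gp_i$ is even, Proposition~\ref{prop_even<->ord=1} supplies $\lambda_i\in\sing{\gp_i}$ of odd order at $\gp_i$; multiplying by suitable $\mu_j$'s we may arrange $\lambda_i\in K_{\gq_j}^2$ for every $j$. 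An order-based independence check, identical to the one at the end of the proof of Proposition~\ref{prop_even->wild}, shows that $\SB := \{\lambda_1,\dotsc,\lambda_n,\mu_1,\dotsc,\mu_m\}$ is an $\FF_2$-basis of $\Sing{Z}$.

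Now take $T_\CS := \id_\CS$ and $t_{\gq_j} := \id$ on $\sqgd{K_{\gq_j}}$ (which is tame). The task is to produce a group automorphism $t_\CS$ of $\Sing{Z}$ together with wild automorphisms $t_{\gp_i}$ of $\sqgd{K_{\gp_i}}$ making the diagram~\eqref{eq_diagram} commute. Since $\rk\Dl{Z}=\rk\Cl\CO_Z=0$, the total evaluation $\rho:=\prod_{\gr\in\CS}\rho_\gr$ is injective, so $t_\CS$ is uniquely determined (and automatically an isomorphism) as soon as $\prod_\gr t_\gr$ preserves $\rho(\Sing{Z})$. Forcing the value of $t_\CS(\lambda_k)$ to be a local square at every $\gq_j$ forces $t_\CS(\lambda_k)=\sum_\ell M_{k,\ell}\lambda_\ell$ with $t_\CS(\mu_k)=\mu_k$, and the evaluations $\rho_{\gp_i}(\lambda_k)=u_{\gp_i}^{\epsilon_{k,i}}\pi_{\gp_i}^{\delta_{k,i}}$ (where $\epsilon_{k,i}=1$ iff $\gp_k\not\smile\gp_i$) translate the compatibility requirement into a linear system in the matrix entries $M_{k,\ell}\in\FF_2$ and the four entries of each local matrix $t_{\gp_i}$. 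One chooses a wild pattern $(C_i,D_i)\neq(0,1)$ at each $\gp_i$ according as $\gp_i$ has no non-$\smile$-neighbour among the remaining $\gp_j$'s (when $\rho_{\gp_i}(\Sing{Z})$ is only $\langle\pi_{\gp_i}\rangle$ and the wild extension $u_{\gp_i}\mapsto u_{\gp_i}\pi_{\gp_i}$ is free) or does (when $\rho_{\gp_i}$ is surjective and the wild value of $t_{\gp_i}(u_{\gp_i})$ is forced by $M$). Once $M$ is verified to be invertible and the constraints consistent, Theorem~\ref{thm_PSCL} extends the resulting small $\CS$-equivalence to a self-equivalence $(\Tt)$ tame on $Z$; by design $\wild{\Tt}=\{\gp_1,\dotsc,\gp_n\}$.

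The main obstacle is the simultaneous construction of all the wild local maps. Since $\smile$ is symmetric but neither reflexive nor transitive (Remark~\ref{rem_nontransitive_smile}), the wildness requirements at different $\gp_i$'s are coupled through $\FF_2$-arithmetic on the non-$\smile$-graph of $\{\gp_1,\dotsc,\gp_n\}$, and naive uniform choices (e.g.\ taking all $(C_i,D_i)=(1,1)$, which corresponds to $M=I+E$) fail either because $M$ degenerates or because the consistency condition $(E^2)_{k,i}=0$ for $\gp_k\smile\gp_i$ is violated. The resolution is to mix the wild patterns $(C_i,D_i)$ according to whether $\gp_i$ is isolated or not in the non-$\smile$-graph, and, in the remaining stubborn cases, to allow $T_\CS$ to swap some pairs of $\gp_i$'s rather than fix them, producing enough flexibility in the linear system to always solve it; the $n=2$ cases (both $\gp_1\smile\gp_2$ and $\gp_1\not\smile\gp_2$) and the nontransitive configuration from Remark~\ref{rem_nontransitive_smile} serve as instructive templates.
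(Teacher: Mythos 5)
Your overall strategy --- building a single small $\CS$-equivalence that is wild at all of $\gp_1,\dotsc,\gp_n$ simultaneously --- is genuinely different from the paper's, and it founders exactly where you yourself say it gets hard. The crux of your argument is the claim that the coupled $\FF_2$-linear system for the matrix $M$ and the local wild maps $t_{\gp_i}$ can \emph{always} be solved, possibly after ``mixing the wild patterns'' and ``allowing $T_\CS$ to swap some pairs of $\gp_i$'s''. That is precisely the statement requiring proof, and you give none: you observe that the naive uniform choices fail, but you do not exhibit a choice that works for an arbitrary non-$\smile$-graph, nor do you verify in general that $M$ is invertible and that $\prod_{\gr\in\CS} t_\gr$ preserves $\rho(\Sing{Z})$. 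Since $\smile$ is neither reflexive nor transitive (Remark~\ref{rem_nontransitive_smile}), the non-$\smile$-graph on $\{\gp_1,\dotsc,\gp_n\}$ can be essentially arbitrary, and treating the $n=2$ cases plus one three-point configuration as ``templates'' does not settle the general case. Moreover, letting $T_\CS$ permute the $\gp_i$'s nontrivially is not a free extra degree of freedom: condition \eqref{it_hilbert_pres} then couples $\sqgd{K_{\gp_i}}$ with $\sqgd{K_{T\gp_i}}$ at a \emph{different} point, the element $\lambda_i$ (of odd order at $\gp_i$) need not restrict to a uniformizer class at $T\gp_i$, and wildness at $\gp_i$ now reads $\ord_{T\gp_i}tu_{\gp_i}\odd$; none of this bookkeeping is carried out.

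The paper avoids the simultaneous construction entirely by inducting on $n$. Proposition~\ref{prop_even->wild} is deliberately stated so as to produce, for the single even point $\gp_1$, a self-equivalence $(\Tt[1])$ that is wild \emph{only} at $\gp_1$ and \emph{fixes} $\gp_1$. One then checks that the images $\gq_i:=T_1\gp_i$ of the remaining points are again even --- this uses $\Dl{\gq_i}=t_1\Dl{\gp_i}=\SingK$ together with Proposition~\ref{prop_even<=>Dl=EE} --- applies the inductive hypothesis to $\{\gq_2,\dotsc,\gq_n\}$, and composes the two self-equivalences; since $T_1\gp_1=\gp_1\notin\{\gq_2,\dotsc,\gq_n\}$, the wild sets accumulate disjointly and $\wild{\Tt}=\{\gp_1,\dotsc,\gp_n\}$. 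If you wish to salvage your direct approach you would have to prove solvability of your linear system for every graph on $n$ vertices; the inductive route shows that this combinatorial problem need not be solved at all.
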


\begin{proof}
We proceed by an induction on the number $n$ of the points. The case $n=1$ simply boils down to the assertion of Theorem~\ref{thm_even=wild}. Hence, suppose that the thesis holds for all sets of cardinality $n-1$ and consider a set of $n$ even points $\{\gp_1, \dotsc, \gp_n\}\subset X$. Since $\gp_1$ is even, thus Proposition~\ref{prop_even->wild} asserts that there exists a self-equivalence $(\Tt[1])$ of $K$ such that $\gp_1$ is the unique wild point of $(\Tt[1])$ and $T_1\gp_1 = \gp_1$. Denote the images of the remaining points by $\gq_2 := T_1\gp_2, \dotsc, \gq_n := T_1\gp_n$. We claim that the points $\gq_2, \dotsc, \gq_n$ are all $2$-divisible in $\PicK$.

In order to prove the claim, observe first, that since $\gp_1$ is even, thus $\Dl{\gp_1} = \SingK$ by Proposition~\ref{prop_even<=>Dl=EE}. Moreover $(\Tt[1])$ is tame on $X\setminus \{\gp_1\}$, therefore $t_1\Sing{\gp_1} = \Sing{T_1\gp_1}= \Sing{\gp_1}$. It follows that also $t_1\Dl{\gp_1} = t_1(\Sing{\gp_1}\cap \squares{K_{\gp_1}})= \Sing{\gp_1}\cap \squares{K_{\gp_1}} = \Dl{\gp_1}$, as every self-equivalence preserves local squares. Consequently we obtain:
\[
t\SingK = t_1\Dl{\gp_1} = \Dl{\gp_1} = \SingK.
\]
Take now any point $\gp_i$ with $i>1$ and write
\[
\SingK = t_1\SingK = t_1\Dl{\gp_i} = t_1\bigl( \SingK\cap \squares{K_{\gp_i}} \bigr) = t_1\SingK\cap t_1\squares{K_{\gp_i}} = \SingK\cap \squares{K_{\gq_i}} = \Dl{\gq_i}.
\]
It follows from Proposition~\ref{prop_even<=>Dl=EE} that $\gq_i\in 2\PicK$, as claimed.

By the inductive hypothesis, there exists a self-equivalence $(\Tt[2])$ of $K$ with the wild set $\wild{\Tt[2]} = \{ \gq_2, \dotsc, \gq_n\}$. The composition $(\Tt)= (T_2\circ T_1, t_2\circ t_1)$ is now the desired self-equivalence of $K$ with the wild set $\wild{\Tt} = \{\gp_1, \dotsc, \gp_n\}$.
\end{proof}

\begin{rem}
The above theorem generalizes only one of the implications of Theorem~\ref{thm_even=wild} to sets having more than one point. This is all we can do, since the opposite implication no longer holds for larger sets. The simplest counterexample, we are aware of, is probably the following one: let $K$ be the function field of an elliptic curve over $\FF_5$ given in Weierstrass normal form by the polynomial $y^2 + x^3 + x + 2$. Take two points: $\gp\sim (1,1)$ and $\gq\sim (1,4)$. Then neither of them is even, since both are rational. Nevertheless, there exists a self-equivalence of $K$ that is wild precisely at these two points. We will discuss the structure of bigger wild set in another paper.
\end{rem}

\subsubsection*{Acknowledgments}
We wish to thank the anonymous reviewer for providing a corrected proof of Lemma~\ref{lem_2ranks} and for useful comments that improved the overall exposition of the paper.

\bibliographystyle{alpha} 
\bibliography{main}
\end{document}